\newtheorem{theorem}{Theorem}[section]
\newtheorem{conjecture}[subsection]{Conjecture}
\newtheorem{lemma}[theorem]{Lemma}
\newtheorem{proposition}[theorem]{Proposition}
\newtheorem{remark}[theorem]{Remark}
\numberwithin{equation}{section}
\newtheorem{definition}[theorem]{Definition}
\begin{document}
\title[Configuration of zero-dimensional subschemes and Mestrano-Simpson conjecture] {Geometry of  some moduli  of  bundles over a very general sextic surface for small second Chern classes and Mestrano-Simpson Conjecture}

\author[D. Bhattacharya]{Debojyoti Bhattacharya }
\address{Indian Institute of Science Education and Research,Thiruvananthapuram,
Maruthamala PO, Vithura,
Thiruvananthapuram - 695551, Kerala, India}
\email{debojyotibhattacharya15@iisertvm.ac.in}

\author[S.Pal]{Sarbeswar Pal}

\address{Indian Institute of Science Education and Research,Thiruvananthapuram,
Maruthamala PO, Vithura,
Thiruvananthapuram - 695551, Kerala, India.}

\email{spal@iisertvm.ac.in, sarbeswar11@gmail.com}

\keywords{vector bundles,   Cayley-Bacharach property, Mestrano-Simpson conjecture}

\date{}

\begin{abstract}
Let $S \subset \mathbb P^3$ be a very general sextic surface over complex numbers.  Let  $\mathcal{M}(H, c_2)$  be the moduli space of  rank $2$ stable bundles  on $S$ with fixed first  Chern class $H$ and second Chern class $c_2$. In this article  we study the configuration of certain locally complete intersection zero-dimensional subschemes on $S$ satisfying Cayley-Bacharach property, which leads to the existence of non-trivial sections of a general memeber of each component of the moduli space for small $c_2$. Using this study  we will make an attempt to prove Mestrano-Simpson conjecture on the number of irreducible components of $\mathcal{M}(H, 11)$ and prove the conjecture partially. We will also   show that $\mathcal{M}(H, c_2)$ is irreducible for $c_2 \le 10$ .
\end{abstract}
\maketitle\section{Introduction}

Let $S$ be a smooth projective irreducible  surface over $\mathbb C$  and $H$ be an ample divisor on S.
Let $r \ge 1$ be an integer, $L$ be a line bundle on $S$, and $c_2 \in H^4(S,\mathbb Z)\,\simeq \,\mathbb Z$.  The moduli space of semistable torsion free sheaves on $S$(w.r.t $H$) with fixed determinant $L$
and second Chern class $c_2$ was first constructed by Gieseker and Maruyama (see \cite{GM}, \cite{MAR}) using Mumford's geometric invariant theory 
and it was proved that  it's a projective scheme 
(need not be reduced). After their construction many people have studied 
the geometry of this moduli space. The study has been done by fixing the underlying surface. For example, when the surface is rational Barth \cite{WB}, Costa and Rosa Maria \cite{RM},  Le Potier \cite{LP} 
proved that the moduli space is reduced, irreducible and rational under certain conditions on rank and Chern classes. When the surface is K3 it has been studied by Mukai \cite{SM} and many others.  When the surface is general, Jun Li \cite{JL} showed that for $c_2$ big enough, the moduli space is also of general type. The guiding general philosophy  
is that the geometry of the moduli space is reflected by the underlying geometry of the surface. The first result without fixing the underlying surface was given by O'Grady. In \cite{OG} O'Grady proved that for sufficiently large second Chern class $c_2$, the moduli space is reduced, generically smooth and irreducible. In fact O'Grady's first step to prove irreducibility was to show each component is generically smooth of expected dimension. The generic smoothness result was also proved by Donaldson \cite{D} in the rank $2$  and trivial determinant case and Zuo \cite{ZUO} for arbitrary determinant.

After O'Grady's result it was important to give an effective bound on $c_2$ for the irreducibility and generic smoothness of the moduli space.  The moduli space of vector bundles over hypersurfaces is one of the important objects to study.
When the underlying surface $S$ is a very general quintic hypersurface in $\mathbb P^3$  Mestrano and Simpson  studied this question systematically and in \cite{KS}, the second  author of this article with K. Dan, studied the question related to Brill-Noether loci.   In a series of papers \cite{SIM1}, \cite{SIM1.5}, \cite{SIM2}, \cite{SIM3},  Mestrano and Simpson proved that the moduli space of rank $2$, $H-$stable torsion free sheaves with fixed determinant $H$ is generically smooth and  irreducible, where $H:= \mathcal{O}_S(1)$. This result was known before by an unpublished work by Nijsse for $c_2 \ge 16$ \cite{N}.

Motivated by the results of Mestrano and Simpson  we look at the next case i.e. the moduli space of rank $2$ torsion free sheaves on a very general sextic surface $S$, that is,  a very general hypersurface  of degree $6$  in 
$\mathbb{P}^3$. 

In \cite{SIM2},  Mestrano and Simpson  showed that the moduli space of stable rank $2$ bundles over a very general sextic surface 
is not irreducible for $c_2 =11$. In fact, they have shown that the moduli space in this case has at least  two different irreducible components.

In fact, they constructed a $12$-dimensional irreducible component $\mathcal{M}_1$ consisting of  vector bundles fitting in an exact sequence of the form
\[
0 \to \mathcal{O}_S \to E \to \mathcal{J}_P(1) \to 0,
\]
where $P$ is a  zero-dimensional locally complete intersection subscheme contained in a rational cubic curve and  a family of vector bundles $\mathcal{M}_2$ of dimension at least $13$ consisting of vector bundles  fitting in an exact sequence of the form
\[
0 \to \mathcal{O}_S \to E \to \mathcal{J}_P(1) \to 0,
\]
where $P$ is a  zero-dimensional locally complete intersection subscheme contained in a hyperplane section. 
  Then they conjectured that $\mathcal{M}_1$ and $\mathcal{M}_2$   are the only two components. In other words, $\mathcal{M}_2$ is an irreducible component of dimension at least $13$ and   $\mathcal{M}_1$ and $\mathcal{M}_2$ cover $\mathcal{M}(H, 11)$, where $\mathcal{M}(H,c_2)$ denotes the moduli space of
$H$-stable, rank $2$ locally free sheaves on $S$ with fixed determinant isomorphic to $H$ and  second Chern class  $c_2$. \\ 

Our main goal in this article is to prove that a general point of each of irreducible components of $\mathcal M(H,c_2)$ over a very general sextic surface with second Chern class $\le 18$ admits a section. We
 apply it  to give a partial proof of  the above conjecture made by  Mestrano and Simpson. 
 Further more the natural questions one can ask are the following:\\
$(1)$ Is  the moduli space  irreducible for $c_2 \le 10$ ?\\
$(2)$  Can one give an effective bound for $c_2$ such that the moduli space
becomes irreducible ?

Let $S \subset \mathbb{P}^3$ be a very general sextic surface over $\mathbb C$ and $H$ denote the very ample line bundle $\mathcal O_S(1)$. The Picard group of $S$ is generated by $H$. Let  $\mathcal{M}(H,c_2)$ denote the moduli space of
$H$-stable, rank $2$ locally free sheaves on $S$ with fixed determinant isomorphic to $H$ and  second Chern class  $c_2$ and $\overline{\mathcal{M}}(H, c_2)$ be  the Gieseker-Maruyama moduli space of semistable torsion free sheaves. 

It is known that $\overline{\mathcal{M}}(H, c_2)$ is projective and $\mathcal{M}(H, c_2)$ sits inside
$\overline{\mathcal{M}}(H, c_2)$
as an open subset, whose complement is called the \textit{boundary}.

The very first and main step towards a proof of the conjecture is  to show that a general element in  each of irreducible components $\mathcal{M}(H, c_2)$ admits a non-zero section.   Major part of this article is devoted to prove the above fact.  The main idea to prove that fact  is to  investigate the possible configurations of certain locally complete intersection zero-dimensional subschemes on $S$ satisfying Cayley-Bacharach property. 
 Using it we also give a bound for the boundary strata of the moduli space. More precisely, we will prove the following Theorems.
 
 \begin{theorem}\label{PP1}
  Let $X$ be an irreducible component of $\mathcal{M}(H, c_2)$. If $c_2 \le 18$, then  a general element $E \in X $ admits a non-zero section. 
   \end{theorem}

\begin{theorem}
The moduli space $\mathcal{M}(H, c_2)$ is non-empty for $c_2 \ge 5, c_2 \ne 7$ and it is irreducible for $c_2 \le 10$.
\end{theorem}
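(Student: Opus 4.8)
The plan is to reduce both statements to the Serre correspondence between sections of rank-$2$ bundles and zero-dimensional subschemes satisfying the Cayley-Bacharach (CB) property, and then to a study of which configurations of points actually occur. On $S$ one has $K_S = 2H$, $H^2 = 6$ and $\chi(\mathcal O_S) = 11$, so Riemann-Roch gives $\chi(E) = 19 - c_2$ for every rank-$2$ sheaf with $c_1(E) = H$ and second Chern class $c_2$. Given a reduced length-$c_2$ subscheme $Z \subset S$ satisfying the CB property with respect to $|K_S + H| = |3H|$, the Serre construction yields a locally free extension
\[ 0 \to \mathcal O_S \to E \to \mathcal J_Z(H) \to 0 \]
with $c_1(E) = H$ and $c_2(E) = c_2$; twisting by $-H$ and using $H^0(\mathcal O_S(-H)) = H^0(\mathcal J_Z) = 0$ shows $H^0(E(-H)) = 0$, so $E$ is automatically $H$-stable. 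Hence non-emptiness reduces to \emph{producing} CB configurations, emptiness to \emph{excluding} them, and irreducibility to understanding the family of all such configurations.

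For non-emptiness I would argue by explicit construction. Since $\operatorname{Pic}(S) = \mathbb Z H$, the surface contains no curve of degree $< 6$, so the points must be cut out on $S$ by a space curve in $\mathbb P^3$. Take a general irreducible rational curve $R \subset \mathbb P^3$ of degree $d$; it meets $S$ transversally in $6d$ points, and since a cubic surface meets $R$ in $3d$ points, any cubic through at least $3d+1$ of the points of $R \cap S$ must contain $R$, hence all of $R \cap S$. Consequently every $Z \subseteq R \cap S$ with $3d + 2 \le |Z| \le 6d$ satisfies the CB property with respect to $|3H|$. The resulting length ranges $[\,3d+2,\,6d\,]$ give exactly $\{5,6\}$ for $d = 1$ and $\bigcup_{d \ge 2}[\,3d+2,\,6d\,] = \{8,9,10,\dots\}$, that is, precisely $c_2 \in \{5,6\} \cup \{c_2 \ge 8\}$. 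Serre's construction then produces a stable bundle for each such $c_2$, proving non-emptiness for all $c_2 \ge 5$, $c_2 \ne 7$.

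For the two gaps I would use Theorem \ref{PP1}: if $\mathcal M(H, c_2)$ were non-empty for $c_2 \le 4$ or $c_2 = 7$ (both $\le 18$), a general member would carry a section, producing a reduced CB configuration of that length. The configuration analysis then yields the contradiction, since the minimal possible CB length is $5$ and no CB configuration of length $7$ exists (the value $7$ lies in the gap between the line range $\{5,6\}$ and the conic range $[8,12]$). For irreducibility with $c_2 \le 10$, Theorem \ref{PP1} shows $\mathcal M(H, c_2)$ is dominated by the image of the incidence variety of pairs $(Z, \xi)$ with $Z$ a CB configuration and $\xi$ an extension class; over the locus $U \subset \operatorname{Hilb}^{c_2}(S)$ of such $Z$ the classes $\xi$ form a projective bundle, so it suffices to show that $U$ is irreducible for $c_2 \le 10$ and then invoke the bound on the boundary and section-free strata to rule out extra components.

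The hard part will be the configuration analysis of the CB locus $U$. In contrast to the full Hilbert scheme $\operatorname{Hilb}^{c_2}(S)$, which is smooth and irreducible, $U$ is carved out by the special CB condition and can a priori split according to which space curve (line, conic, twisted cubic, or hyperplane section) carries the points; indeed at $c_2 = 11$ the twisted-cubic and hyperplane-section families are exactly what separate the two Mestrano-Simpson components $\mathcal M_1$ and $\mathcal M_2$. Proving that for $c_2 \le 10$ all these strata instead fuse into a single irreducible family, and that their dimensions stay below the boundary bound so that no section-free or non-locally-free component can appear, is the delicate estimate that distinguishes the irreducible range $c_2 \le 10$ from the reducible case $c_2 = 11$.
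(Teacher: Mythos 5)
Your non-emptiness argument is correct and in fact more uniform than the paper's: the paper constructs bundles case by case (points on a line for $c_2=5,6$, on a conic for $c_2=8$, on a complete intersection of two cubics for $c_2=9$, on a plane cubic for $c_2=10$), whereas your single Bezout argument on subsets of $R\cap S$ for a rational curve of degree $d$ produces stable bundles for all $c_2\in\{5,6\}\cup\{8,9,\dots\}$ at once. But the real content of the theorem is irreducibility for $c_2\le 10$, and there your proposal stops exactly where the proof has to begin: you reduce everything to irreducibility of the CB locus $U\subset\operatorname{Hilb}^{c_2}(S)$ and then concede that showing the strata ``fuse into a single irreducible family'' is ``the delicate estimate'' --- that is, you defer the entire argument. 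The paper carries it out case by case. For $c_2=5,6$ it does not argue through $U$ at all; it analyzes the subsheaf of $E$ generated by global sections and identifies $\mathcal{M}(H,5)\cong S$ and $\mathcal{M}(H,6)\cong\mathbb{P}^3\setminus S$ via the sheaves $\mathcal{R}_p$, isomorphisms your framework gives no hint of. For $c_2=8,9,10$ it needs three inputs absent from your sketch: the classification of CB(3) subschemes of length $\le 10$ (\cite[Proposition 17.8]{SIM3}: $P$ lies on a plane, a pair of skew lines, or a double line), which a priori rules out components of $U$ not of your rational-curve type; the lower bound of \cite[Corollary 3.1]{SIM1} on the dimension of \emph{every} component of $\mathcal{M}(H,c_2)$, used in Lemma \ref{P1} to kill the skew-lines and double-line strata; and the uniqueness of the conic (resp.\ complete intersection of cubics, resp.\ plane cubic) through $P$, which feeds a monodromy argument on the choices of points of $C\cap S$ as $C$ moves. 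Without these, the assertion that the strata fuse for $c_2\le 10$ but not for $c_2=11$ is an observation about the answer, not a proof.

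Two further concrete problems. First, your exclusion of $c_2=7$ (and the claim that $5$ is the minimal CB length) is a non sequitur: the gap between your ranges $\{5,6\}$ and $[8,12]$ only shows that \emph{your construction} produces no length-$7$ CB scheme, not that none exists. What is needed is the classification statement that any CB(3) subscheme of length $7$ lies on a line (the paper quotes \cite[Proposition 17.8]{SIM3}; alternatively one argues via Theorem \ref{T1}), since a line meets $S$ in a length-$6$ scheme; and this must hold for locally complete intersection subschemes, not only reduced ones, because the zero scheme of a section of a hypothetical $E\in\mathcal{M}(H,7)$ need not be reduced and no ``general member'' perturbation is available when the putative component could be anything. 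Second, your claim that the extension classes $\xi$ form a projective bundle over $U$ is false wherever $h^1(\mathcal{J}_P(3))$ jumps; the paper instead verifies that for the relevant general $P$ one has $h^1(\mathcal{J}_P(3))=1$, so $E$ is determined by $P$ up to isomorphism, and separately computes $h^0(\mathcal{J}_P(1))$, hence $h^0(E)$, to subtract the dimension of the choice of section. This bookkeeping, which your proposal omits, is precisely what yields the dimensions $7$, $\le 10$, and $11$ for $c_2=8,9,10$.
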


\begin{theorem}
Suppose $\mathcal{M}(H, c_2)$ is good for $c_2 \ge 27$. Then $\overline{\mathcal{M}}(H, c_2)$ is also good for $c_2 \ge 27$. 
\end{theorem}
Finally  as an application we will give a partial proof of  the Mestrano-Simpson conjecture, more precisely we will prove that $\mathcal{M}_1, \mathcal{M}_2$ cover $\mathcal{M}(H, 11)$
and $\mathcal{M}_2$ is of dimension exactly $13$ containing an irreducible subset of dimension $13$ and possibly one more irreducible component of dimension $12$.

  In particular, $h^0(E)> 0$ for any stable bundle $E \in \mathcal{M}(H, 11)$. 

\subsection{ The organization of the paper:}

In section \ref{S1}, we will recall some basic results which we will use in the subsequent sections. \\
In section \ref{S2},  we investigate the possible configurations of a  zero dimensional subscheme  of length $l \ge 17$, satisfying Cayley-Bacharach property for $\mathcal{O}(5)$.  In fact we shall show that if $l \le 16$ then any such subscheme is contained in a quadric hypersurface.\\
In section \ref{S3}, we will show that if $c_2 \le 18$, then every general  point $E$ in each of the irreducible components of $\mathcal{M}(H, c_2)$  admits a  non-zero section. \\
In section \ref{S4}, we apply Theorem \ref{PP1} to give a partial proof of  Mestrano-Simpson conjecture. \\
In section \ref{S5}, we will show that the moduli space $\mathcal{M}(H, c_2)$ is non-empty and irreducible for $5 \le c_2 \le 10, c_2 \ne 7$ and for $c_2 \le 4$ and $c_2 = 7$, it is empty.\\
Finally in section \ref{S6}, we will give an upper bound for the boundary strata and using it we will show that if $\mathcal{M}(H, c_2)$ is good for $c_2 \ge 27$, then $\overline{\mathcal{M}}(H, c_2)$ is also good for $c_2 \ge 27$. In other words, every component of $\overline{\mathcal{M}}(H, c_2)$ has the dimension equals to the expected dimension $4c_2 -39$.

\section*{Notation and convention}
For   the line bundle $\mathcal{O}_{\mathbb{P}^3}(n)$ we write simply   $\mathcal{O}(n)$ and for a subscheme $Z \subset \mathbb{P}^3$, we denote the pull back of $\mathcal{O}(n)$ to $Z$  by $\mathcal{O}_Z(n)$. \\
If  $S$ is a very general hypersurface of degree $6$ then, $\text{Pic}(S) \simeq \mathbb{Z}$.   It is not difficult to see that $h^0(S,  \mathcal{O}_S(n)) = h^0(\mathbb{P}^3, \mathcal{O}(n)), n \le 5$. Thus a zero-dimensional subscheme  $P \subset S$ which satisfies Cayley-Bacharach property for $\mathcal{O}_S(m)$  also satisfies Cayley-Bacharach property for $\mathcal{O}(m)$   and vice-versa.\\
Thus if  a zero-dimensional subscheme  $P \subset S$ satisfies Cayley-Bacharach property for $\mathcal{O}_S(m)$, then with out loss of generality we can assume  that   $P $  satisfies Cayley-Bacharach property for $\mathcal{O}(m)$ in $\mathbb{P}^3$ and  we write $P$ satisfies $\text{CB}(m)$.\\ 

\section{Preliminaries}\label{S1}
In this section we will recall few results which we need in next sections. 
\begin{theorem}\label{T1}
Let $P$ be a set of points in $\mathbb{P}^r$, and let $d \ge 2$ be an integer. If, for all $k \ge 1$, no $dk + 2$ points of $P$ lie in a projective $k-$plane,
then $P$ impose independent conditions on forms of degree $d$; in fact there is a multilinear form of degree $d$ containing any subset consisting of all but one of the 
points, but missing the last.
\end{theorem}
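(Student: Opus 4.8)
The plan is to reduce the statement to a combinatorial covering problem and resolve that by induction on $d$. Note first that the ``in fact'' clause is the real content: as soon as I produce, for every $q\in P$, a degree-$d$ form vanishing on $P\setminus\{q\}$ but not at $q$, the surjectivity of the evaluation map $H^0(\mathbb{P}^r,\mathcal{O}(d))\to\bigoplus_{p\in P}\mathbb{C}$ — i.e. the independence of the conditions — is immediate. Moreover the word ``multilinear'' signals that the form should factor into $d$ linear forms, so I would realize it as a product $\ell_1\cdots\ell_d$. Everything therefore reduces to the following claim: under the hypothesis, for each $q\in P$ the set $P\setminus\{q\}$ can be covered by $d$ hyperplanes none of which passes through $q$. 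Indeed, if $H_1,\dots,H_d$ are such hyperplanes with defining forms $\ell_1,\dots,\ell_d$, then $\ell_1\cdots\ell_d$ vanishes at every point of $P\setminus\{q\}$ (each lies on some $H_i$) and is nonzero at $q$ (which lies on no $H_i$).

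I would prove the covering claim by induction on $d$. In the base case $d=1$ the hypothesis says no $k+2$ points lie in a $k$-plane, i.e. $P$ is in linearly general position (and in particular $|P|\le r+1$); then $P\setminus\{q\}$ spans a proper linear subspace that misses $q$, and this extends to the single required hyperplane. For the inductive step I want to ``peel off'' one hyperplane: I look for a hyperplane $H$ with $q\notin H$ such that the residual set $P\setminus H$, which still contains $q$, satisfies the degree-$(d-1)$ hypothesis, namely that no $(d-1)k+2$ of its points lie in a $k$-plane. Granting such an $H$, the inductive hypothesis applied to $P\setminus H$ and the point $q$ produces $d-1$ hyperplanes avoiding $q$ covering $(P\setminus H)\setminus\{q\}$; adjoining $H$ covers the points of $P\setminus\{q\}$ that lie on $H$, giving the desired $d$ hyperplanes through none of which $q$ passes.

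The heart of the matter, and the step I expect to be the main obstacle, is producing this peeling hyperplane $H$. Unwinding the requirement, for every $k$-plane $\Lambda$ I need $H$ to contain at least $|P\cap\Lambda|-(d-1)k-1$ of the points of $P\cap\Lambda$; since the hypothesis gives $|P\cap\Lambda|\le dk+1$, this asks $H$ to absorb up to $k$ points from each ``extremal'' $k$-plane simultaneously, while still avoiding $q$. My approach would be to take $H$ to be a hyperplane avoiding $q$ that contains a maximal number of points of $P$, and then argue by contradiction: if some $k$-plane $\Lambda$ violated the residual bound it would carry at least $(d-1)k+2$ points off $H$, hence at most $k-1$ points of $P$ on $H\cap\Lambda$, and I would exploit this slack to exhibit a hyperplane through strictly more points of $P$ — concretely by projecting from $\Lambda$, choosing an optimal hyperplane in the smaller projective space $\mathbb{P}^{r-k-1}$ through the images of the remaining points while avoiding the image of $q$, and pulling it back to a hyperplane containing all of $\Lambda$. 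Making this exchange uniform over all offending $\Lambda$ is the delicate part; I anticipate it forces an auxiliary induction on the ambient dimension $r$ via that linear projection, with the numerical gap ``at most $k-1$ points retained on $H$ versus $k$ points that must be absorbed'' as the quantity that drives the contradiction.
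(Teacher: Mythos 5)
Your reduction is fine as far as it goes: separating each point $q$ by a form vanishing on $P\setminus\{q\}$ is indeed equivalent to imposing independent conditions, ``multilinear'' does mean a product $\ell_1\cdots\ell_d$ of linear forms, so the theorem is equivalent to your covering claim, and your base case $d=1$ (linearly general position, $|P|\le r+1$, so $q\notin\langle P\setminus\{q\}\rangle$) is correct. But note first that the paper itself contains no proof to compare against --- Theorem \ref{T1} is quoted verbatim from Eisenbud--Koh \cite{DE} with the proof deferred to [Theorem 2] there --- so your argument has to stand on its own, and it does not: the ``peeling lemma'' (existence of a hyperplane $H$ with $q\notin H$ such that $P\setminus H$ satisfies the degree-$(d-1)$ hypothesis) is the entire content of the theorem, and you leave it at the level of an anticipated strategy (``I expect,'' ``I anticipate'') rather than an argument.

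Worse, the specific exchange you sketch does not close. Take $H$ maximal among hyperplanes avoiding $q$ and suppose a $k$-plane $\Lambda$ carries at least $(d-1)k+2$ points of $P\setminus H$. Any hyperplane $H'$ containing $\Lambda$ meets $H$ in an $(r-2)$-plane $M\supseteq H\cap\Lambda$, and $H'$ retains only the points of $P\cap H$ lying on $M$. Nothing in your setup bounds the loss: if $P\cap H$ is in linearly general position in $H$ with $|P\cap H|$ near the allowed maximum $d(r-1)+1$, then every such $M$ contains at most $r-1$ of those points, so passing to $H'$ loses roughly $(d-1)(r-1)$ points while gaining only $(d-1)k+2$ --- e.g.\ $d+1$ when $\Lambda$ is a line --- a net loss for all $r\ge 3$. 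So maximality of $H$ yields no contradiction from the stated projection argument; either one must rule out such configurations (which is again the real combinatorial content) or choose $H$ by a subtler criterion than point count, which is what the actual proof in \cite{DE} has to do. As it stands, the proposal is an unproved reduction of the theorem to an equally hard lemma, i.e.\ a genuine gap.
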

\begin{proof}
See \cite[Theorem 2]{DE}
\end{proof}
\begin{theorem}\label{T}(Chasles)
If a set $\Gamma_1$ of $8$ points in $\mathbb{P}^2$ lies in the complete intersection
$\Gamma$ of two cubics, then any cubic vanishing on $\Gamma_1$ vanishes on $\Gamma$.
\end{theorem}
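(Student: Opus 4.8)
The plan is to prove Chasles' theorem by locating the ninth intersection point directly on a smooth cubic of the pencil. Let $F, G$ be the two cubics with $\Gamma = V(F) \cap V(G)$; by B\'ezout $\Gamma$ has length $9$, and as $\Gamma_1 \subset \Gamma$ consists of eight reduced points I may write $\Gamma = \Gamma_1 \cup \{p_9\}$. Every member of the pencil $\{\lambda F + \mu G\}$ contains all of $\Gamma$, so it suffices to show that an arbitrary cubic $C$ vanishing on the eight points of $\Gamma_1$ also vanishes at $p_9$. Equivalently, letting $V \cong \mathbb{C}^{10}$ be the space of cubic forms, I must prove that $\Gamma_1$ imposes independent conditions on $V$: then the cubics through $\Gamma_1$ form a space of dimension $10 - 8 = 2$, which therefore equals $\langle F, G \rangle$, and any such $C = \lambda F + \mu G$ vanishes at $p_9$ by construction.

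First I would pass to a smooth member of the pencil. Because $\Gamma$ is a reduced complete intersection, at each base point $p_i$ the curves $V(F), V(G)$ meet transversally, so $\nabla F(p_i)$ and $\nabla G(p_i)$ are linearly independent; hence no member of the pencil is singular at a base point, and Bertini's theorem (smoothness of the general member away from the base locus) yields a smooth cubic $E$ in the pencil passing through all nine points. If $E \subset V(C)$ we are done immediately, so assume the intersection $C \cap E$ is zero-dimensional. Since $\mathcal{O}_E(1)$ has degree $3$, every cubic form restricts to a section of $\mathcal{O}_E(3)$; thus $\operatorname{div}(G|_E) = p_1 + \cdots + p_9$ and $\operatorname{div}(C|_E) = p_1 + \cdots + p_8 + q$ for the ninth point $q$ of the length-$9$ divisor $C \cap E$. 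These two divisors are zero loci of sections of the same line bundle, hence linearly equivalent, so $p_9 \sim q$ as degree-one divisors on the genus-one curve $E$. A degree-one complete linear system on an elliptic curve is a single point, so $p_9 \sim q$ forces $q = p_9$, and therefore $C$ vanishes at $p_9$.

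The hard part will be guaranteeing an integral (smooth) member of the pencil with the nine points at smooth points, that is, controlling the non-reduced or otherwise degenerate complete intersections that the bare hypothesis permits. To make the argument insensitive to this I would instead establish the independence of $\Gamma_1$ cohomologically. The Koszul resolution
\[ 0 \to \mathcal{O}(-3) \to \mathcal{O} \oplus \mathcal{O} \to \mathcal{I}_\Gamma(3) \to 0 \]
gives $h^0(\mathcal{I}_\Gamma(3)) = 2$, and independence of $\Gamma_1$ is equivalent to $H^1(\mathcal{I}_{\Gamma_1}(3)) = 0$. Since $\Gamma_1$ is linked to the single point $\{p_9\}$ by the complete intersection $\Gamma$ of two cubics, liaison duality identifies $H^1(\mathcal{I}_{\Gamma_1}(3))$ with $H^1(\mathcal{I}_{\{p_9\}}(3+3-3-3))^{\vee} = H^1(\mathcal{I}_{\{p_9\}})^{\vee}$, which vanishes because a single point imposes one independent condition in every nonnegative degree. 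This proves independence, and hence the theorem, with no smoothness hypothesis; it is exactly the codimension-two, cubic case of the Cayley--Bacharach formalism underlying Theorem \ref{T1}.
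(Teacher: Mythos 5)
Your proof is correct, but note that the paper does not actually prove this statement: it simply cites \cite[Corollary 2.8]{EP}, where the result is deduced from the Gale transform/Cayley--Bacharach machinery. Your argument is therefore a genuine addition rather than a rederivation. The two halves of your proposal play different roles. The first, elliptic-curve argument (smooth pencil member via transversality at the base points plus Bertini, then $p_1+\cdots+p_8+q \sim 3H \sim p_1+\cdots+p_9$ on $E$ and $h^0(\mathcal{O}_E(p))=1$ in genus one, forcing $q=p_9$) is elementary and complete, but only under the reducedness of $\Gamma$, as you correctly flag: the statement allows the residual point to collide with one of the eight, i.e.\ $\Gamma$ nonreduced at a point of $\Gamma_1$, and then there is no ninth base point to chase. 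Your second argument closes exactly this gap: the residual scheme $\Gamma_2=(\mathcal{I}_\Gamma:\mathcal{I}_{\Gamma_1})$ has length $9-8=1$ by Gorenstein linkage, hence is automatically a reduced point (possibly supported inside $\Gamma_1$), and the liaison duality $h^1(\mathcal{I}_{\Gamma_1}(t))=\deg\Gamma_2-h_{\Gamma_2}(s-t)$ with $s=d_1+d_2-3=3$, $t=3$ gives $h^1(\mathcal{I}_{\Gamma_1}(3))=1-1=0$; then $h^0(\mathcal{I}_{\Gamma_1}(3))=10-8=2$ forces the cubics through $\Gamma_1$ to coincide with the pencil $\langle F,G\rangle$, whose members vanish on $\Gamma$ scheme-theoretically. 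This is airtight, with the honest caveat that invoking liaison duality (Davis--Geramita--Orecchia, or CB7 of Eisenbud--Green--Harris) is invoking a theorem of which Chasles is the first nontrivial case---so in spirit it is close to the paper's own citation of \cite{EP}---but there is no circularity, since that duality is proved from the Koszul resolution and Gorenstein duality, not from Chasles. What your version buys over the paper's bare citation is an elementary, geometric proof in the generic case and a precise identification of where degeneracy matters; it also makes transparent the link to the independent-conditions formalism of Theorem~\ref{T1} that the paper uses throughout Section~\ref{S2}.
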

\begin{proof}
See \cite[Corollary 2.8]{EP}.
\end{proof}
\begin{remark}\label{R}
Note that any $m$ points of a plane lying in an intersection of two  cubics in $\mathbb P^3$, where $m \ge 9$, satisfies the Cayley-Bacharach property for $\mathcal{O}(3)$ in $\mathbb{P}^3$. 
 \end{remark}
\begin{theorem}[Castelnuovo Theorem]\label{CN}
Let $\Gamma \subset \mathbb{P}^n$ be a zero dimensional subscheme of degree $d$ which are in linearly general position. Denote $h_{\Gamma}(m)$ by the number of independent conditions imposed by $\Gamma$ on degree $m$ hypersurfaces. If $d \ge 2n+3$ and $h_{\Gamma}(m) = \text{min}\{d, mn+1\}$, then $\Gamma$ lies in a raional normal curve.
\end{theorem}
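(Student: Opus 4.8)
The plan is to reduce the statement to its decisive instance $m=2$, which is the classical Castelnuovo quadric lemma. Once $\Gamma$ is shown to lie on a rational normal curve $C$, the values $h_\Gamma(m)=\min\{d,mn+1\}$ for the remaining $m$ are automatic, since $C\cong\mathbb P^1$ has degree $n$ and a degree-$m$ form restricts to a form of degree $mn$ on $C$, i.e. to a section of $\mathcal O_{\mathbb P^1}(mn)$. So I fix $m=2$. Here the hypothesis reads $h_\Gamma(2)=\min\{d,2n+1\}=2n+1$ (using $d\ge 2n+3$), which says precisely that $\Gamma$ imposes only $2n+1$ conditions on quadrics, equivalently that the quadrics through $\Gamma$ form a linear system of dimension $\binom{n+2}{2}-(2n+1)=\binom{n}{2}$. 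I will lean on two comparison facts: a rational normal curve in $\mathbb P^n$ likewise imposes exactly $2n+1$ conditions on quadrics, so the quadrics through it also form a $\binom{n}{2}$-dimensional system; and such a curve is the scheme-theoretic base locus of the quadrics containing it (its ideal is generated by the $2\times 2$ minors of a $2\times n$ Hankel matrix).

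First I would extract the Cayley--Bacharach structure. Applying Theorem \ref{T1} with $d=2$, any subset of at most $2n+1$ of the points, being in linearly general position, imposes independent conditions on quadrics. Combined with $h_\Gamma(2)=2n+1$ and $d\ge 2n+3$, this forces $h_{\Gamma\setminus\{p\}}(2)=2n+1=h_\Gamma(2)$ for every $p\in\Gamma$, so $p$ lies on every quadric through $\Gamma\setminus\{p\}$; that is, $\Gamma$ satisfies $\mathrm{CB}(2)$. Next I would produce the candidate curve: through any $n+3$ points in linearly general position there passes a unique rational normal curve $C$ (a classical fact), and I take $C$ through $p_1,\dots,p_{n+3}$. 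The goal becomes showing that every $p_j$ with $j>n+3$ also lies on $C$. The strategy is to compare the two $\binom{n}{2}$-dimensional quadric systems: it suffices to prove that every quadric through $\Gamma$ contains $C$, for then the quadrics through $\Gamma$ are contained in, hence by equality of dimensions equal to, the quadrics through $C$, whence $\Gamma$ lies in their common base locus, which is exactly $C$.

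The heart of the argument is the implication $Q\supseteq\Gamma\Rightarrow Q\supseteq C$. Restricting such a $Q$ to $C\cong\mathbb P^1$ gives a form of degree $2n$, which vanishes identically, forcing $Q\supseteq C$, as soon as it vanishes at more than $2n$ points of $C$. The difficulty is that restriction to $C$ only detects points of $\Gamma$ already known to lie on $C$, and initially we have only $n+3\le 2n$ of them. I would therefore organize the proof as an induction on $n$ via projection rather than a single dimension count: projecting from $p_1$ to $\mathbb P^{n-1}$, the images $\bar p_2,\dots,\bar p_d$ remain in linearly general position, and one checks they impose only $2(n-1)+1=2n-1$ conditions on quadrics of $\mathbb P^{n-1}$; the inductive hypothesis then places them on a rational normal curve $C'$, so $p_2,\dots,p_d$ lie on the cone over $C'$ with vertex $p_1$. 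Repeating with a second vertex and intersecting the cones recovers $C$ and shows all the $p_j$ lie on it. The base case $n=2$ is immediate: $d\ge 7$ points with no three collinear imposing $5$ conditions on conics lie on a unique, necessarily smooth, conic.

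The main obstacle I expect is exactly this reduction step: controlling how the number of conditions imposed on quadrics transforms under projection from a point of $\Gamma$, and verifying that the projected configuration genuinely drops to $2n-1$ conditions while staying in linearly general position. The delicate points are that quadrics singular at the projection centre behave differently from general ones, and that one must rule out a spurious common component of the quadric system that could obstruct identifying the base locus with a rational normal curve. Managing these two issues — the quadric count under projection and the preservation of general position — is where the real work of the proof lies; the remaining dimension comparison and the restriction-to-$C$ vanishing are then formal.
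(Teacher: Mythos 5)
The paper offers no argument for Theorem \ref{CN}: its ``proof'' is the citation to \cite{ACGH}, so there is no internal proof to compare your route against, and your attempt must be judged on its own terms. Its skeleton is the standard one, and the formal parts are correct: the reduction to $m=2$ (where $\min\{d,2n+1\}=2n+1$ because $d\ge 2n+3$); the deduction of $\mathrm{CB}(2)$ from Theorem \ref{T1} (any subset of at most $2n+1$ points in linearly general position imposes independent conditions on quadrics --- note it is the case $k=n$ in Theorem \ref{T1} that caps the cardinality at $2n+1$, a point worth stating); the count $h^0(\mathcal{J}_\Gamma(2))=\binom{n+2}{2}-(2n+1)=\binom{n}{2}=h^0(\mathcal{J}_C(2))$; and the fact that a rational normal curve is cut out scheme-theoretically by its quadrics, so that everything reduces to showing that every quadric through $\Gamma$ contains $C$.

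The genuine gap sits exactly at the step you defer with ``one checks'': that the projected points impose only $2n-1$ conditions on quadrics of $\mathbb{P}^{n-1}$. Quadrics of $\mathbb{P}^{n-1}$ through the images correspond to quadric cones with vertex $p_1$ containing $\Gamma$, i.e.\ to the kernel of the gradient map $H^0(\mathcal{J}_\Gamma(2))\to\mathbb{C}^n$, $Q\mapsto\nabla Q(p_1)$. The naive count therefore gives only $\dim\ge\binom{n}{2}-n=\binom{n-1}{2}-1$, one less than the $\binom{n-1}{2}$ you need; equivalently $h_{\Gamma'}(2)\in\{2n-1,\,2n\}$, and nothing in your proposal excludes $2n$. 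Excluding it means showing the gradient map has rank at most $n-1$, i.e.\ that the base locus of the quadrics through $\Gamma$ is positive-dimensional at $p_1$ --- essentially the conclusion you are trying to reach, so as written the induction is circular at its pivot. Nor does pushing $\mathrm{CB}(2)$ down the projection (which does descend: a cone through $\Gamma\setminus\{q\}$ is in particular a quadric through $\Gamma\setminus\{q\}$, hence contains $q$) repair this: for $n=3$ the bad case $h_{\Gamma'}(2)=6$ means no conic at all passes through the $d-1\ge 8$ projected points, and $\mathrm{CB}(2)$ is then vacuous. The classical treatments do real work precisely here; for $n=3$ it is a Bezout/liaison analysis of the net of quadrics through $\Gamma$ --- of exactly the kind this paper carries out in Proposition \ref{PP2} --- that produces a common curve of the net and uses linear general position to force it to be a twisted cubic. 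A second, smaller gap is your endgame: the two cones over rational normal curves with vertices $p_1,p_2$ meet in a curve of degree $(n-1)^2>n$, so $C$ is only one component of the intersection (for $n=3$: a twisted cubic plus a line), and you must still argue, e.g.\ via linear general position bounding the points of $\Gamma$ on the residual components, that all of $\Gamma$ lies on the degree-$n$ component. Finally, note that as printed the hypothesis $h_\Gamma(m)=\min\{d,mn+1\}$ is vacuous once $mn+1\ge d$ (it then just asserts independent conditions, and the conclusion fails for general points); your tacit reading --- that the hypothesis is available at $m=2$, where it says $h_\Gamma(2)=2n+1<d$ --- is the only one under which the theorem is true, and that should be made explicit.
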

\begin{proof}
See \cite{ACGH}
\end{proof}

   \section{Configurations of certain zero dimensional subscheme of $S$ satisfying $\text{CB}(5)$}\label{S2}

   Let $S$ be a very general,  irreducible hypersurface of degree $6$ in $\mathbb{P}^3$.   Let $P \subset S$ be a zero dimensional subscheme  satisfying  $\text{CB}(5)$.  In this section we shall investigate the possible configurations of $P$.  
   More precisely, we prove the following Theorem,
   \begin{theorem}\label{TY}
   Let $S$ be a very general   hypersurface of degree $6$ in $\mathbb{P}^3$. Let $P$ be a zero-dimensional, locally complete intersection subscheme of $S$ of length $c_2 +12$,  $5 \le c_2  \le 18$, satisfying $\text{CB}(5)$ and not contained in any quadratic hypersurface. Then we have the following possibilities:\\
$(I)$ $P$ is contained in a union of a rational cubic curve and a pair of skew lines.\\ 
$(II)$  $P$ is contained in a union of a rational cubic curve, a line and a degree $2$ plane curve. \\
$(III)$ $P$ is contained in a union of a rational cubic curve and a plane curve of degree $2$.\\
$(IV)$ $P$ is contained in a union of a rational cubic curve and plane curve of degree $3$.\
 \end{theorem}
 \begin{proof}
   If a subscheme $P^{\prime}$ of $P$ of length $17$ fails to impose independent conditions on sections of $\mathcal{O}(5)$, then by Theorem \ref{T1}, there is a plane containing a subscheme of length at least $12$. Thus if any subscheme of $P$ of length $17$ fails to impose independent conditions on sections of $\mathcal{O}(5)$, then repeating the argument on several subschemes of length $17$, one can show that $P$ lie on a plane.\\
   Thus there is a subscheme $P^{\prime}$ of $P$ of length at least $17$ which imposes independent conditions on sections of $\mathcal{O}(5)$.\\
   
   {\bf Step-$1$}: \\
   In this step we will show that $P^{\prime}$ is contained in a union of a rational cubic curve and two planes not necessarily distinct.\\
   If $P^{\prime}$ is in linearly general position, then by repeated application of Theorem \ref{CN}, we see that $P^{\prime}$ is contained in a rational cubic curve. \\
   If $P^{\prime}$ is not in linearly general position, then there is a plane $H_1$ containing a subscheme $P^{\prime \prime}$ of length at least 4. If the residual of $P^{\prime \prime}$ in $P^{\prime}$ is not in linearly general position, then there is another plane $H_2$, containing a subscheme of length at least $4$.\\
   If the residual of $P^{\prime }\cap (H_1 \cup H_2)$ has length at least $4$, then it has to be in linearly general position. Otherwise, there will be another plane $H_3$ containing a subscheme of length at least $4$. Thus there are $3$ planes each contains a subscheme of length at least $4$. Then one can find two subschemes $P_1, P_2$ of length $9$ contained in $P^{\prime} \cap (H_1 \cup H_2 \cup H_3)$ which are in linearly general position and they intersect in a subscheme of length at least $7$.  Thus by Theorem \ref{CN}, $P_1$ and $P_2$ contained in  rational cubic curves $C_1$ and $C_2$ respectively and $C_1 \cap C_2$ has length at least $7$. Since two distinct rational cubic curves can intersect in a subscheme of length at most $6$, $C_1=C_2$, and hence it intersects one of the plane in a subscheme of length at least $4$, a contradiction. \\
   Thus $P^{\prime}$ is contained in a union of a rational cubic curve $C_1$  and two planes $H_1, H_2$.\\
   
   {\bf Step-$2$}:\\
   In this step we will show the subscheme $P$ itself is contained in a union of a rational cubic curve and two planes not necessarily distinct.
   Assume $H_1 \ne H_2$.
   Let $\tilde{P}$ be the residual subscheme of $P \cap (C_1 \cup H_1 \cup H_2)$
    in $P$. Then $\tilde{P}$ has length at most $13$. If $\tilde{P}$ fails to impose independent conditions on sections of $\mathcal{O}(5)$, then by Theorem \ref{T1}, they lie in a plane $H_3$. Then any subscheme of length $12$ intersecting each of the hyperplanes $H_i, 1= 1, 2, 3$, in a subscheme of length $4$, fails to impose independent conditions and hence they all lie in a plane and intersects all the planes $H_i, i = 1, 2, 3$ in a subscheme of length $4$.  Otherwise we will get a contradiction as in Step- $1$.
    Therefore, in this situation, $H_1=H_2=H_3$.\\
    Let us assume that $\tilde{P}$ imposes independent conditions on sections of $\mathcal{O}(5)$. If the subschemes $\tilde{P}$ and $P \cap C_1$, together are not in linearly general position, then there is a plane containing a subscheme of length at least $4$ and we get a contradiction as in Step-$1$. Hence they are in linearly general position. Then one can show that they all lie in $C_1$.\\
    Therefore, $P$ is contained in a union of a rational cubic curve $C_1$ and two planes $H_1, H_2$.\\
    
   {\bf Step-$3$}:\\
   In this step we will show that if the two planes are distinct, then only the first two cases in the Theorem can  occur.
   Note that, the subschemes $H_i \cap P, i = 1. 2$ satisfy $\text{CB}(2)$, hence they have  length at least $4$.
   Since, each  of the subschemes in $C_1\cap P$ satisfies $\text{CB}(3)$, the length of the subscheme $C_1 \cap P$ is at least $11$.\\
   Case-$(1)$:   Length of $C_1 \cap P \le 16$.\\
   Let us assume that both the subschemes  $P \cap H_1$ and $ P \cap H_2$ lie in curves of degree $2$.  Note that, a general quintic hypersurface intersects a degree two curve in a subscheme of length $10$. Thus if both the
   subschemes  $P \cap H_1$ and $ P \cap H_2$ have length $\le 10$, then one can find a quintic hypersurface containing the subschemes $P \cap H_1$, $ P \cap H_2$ and a co-length $1$ subscheme of  $C_1 \cap P$ but not containing the degree two curves and  $C_1 \cap P$. \\
   Case-$(2)$: Length of $C_1 \cap P \ge 17$.\\
   In this case one of the subschemes, say $P \cap H_1$ has length $\le 6$. Then one can find a quintic hypersurface containing $C_1$, the subscheme $P \cap H_2$ and a co-length $1$ subscheme of $P \cap H_1$ but not containing $P \cap H_1$ .\\
   Thus $P$ fails to satisfy $\text{CB}(5)$. \\
   Let one of the subschemes say, $P \cap H_1$ has length at least $11$. Then the other subscheme $P \cap H_2$ has length at most $8$. Note that, the degree $2$ curve containing $P \cap H_1$ can intersect the other degree two curve containing $P \cap H_2$ in a subscheme of length at most two, as earlier one can find a quintic hypersurface, containing $C_1 \cap P$ but not containing $C_1$, the degree two curve containing $P \cap H_1$ and a co-length $1$ subscheme of $P \cap H_2$ but not containing $P \cap H_2$. Hence $P$ fails to satisfy $\text{CB}(5)$. 
   Thus one of the subschemes $H_1 \cap P$ and $H_2 \cap P$ lie in a line and other lie on a degree two plane curve.\\
   
   {\bf Step-$4$}:\\
   In this step we will show that if the two planes coincide, then only last cases in the Theorem can occur.
   Let $H_1= H_2= H$. Then since $P \cap C_1$ satisfies CB(4), its length is at least $14$. Thus the length of $P \cap H$ is at most $16$. If $P \cap H$ lie on a line, then it has length at most $6$. Then one can find a quadratic hypersurface containing $C_1$ and a subscheme of $P \cap H$ of length $2$ and the residual has length at most $4$ and  satisfy $\text{CB}(3)$, a contradiction.  Thus $P \cap H$ is contained in curve of degree $\ge 2$. If it is contained in a curve of degree $\ge 4$, then as in Step-$4$, one can show that, $P$ fails to satisfy $\text{CB}(5)$.

\end{proof}
    
 \begin{remark}\label{R11}
  Assume that the case $(II)$ occurs, that is $P$ is contained in a union of rational cubic curve $C_1$, a line $l$ and a degree $2$ plane curve
 $C_2$. Then one can get a quadratic hypersurface $Q$ containing $C_1$ and a subscheme of length $2$ of $P \cap l$ and residual of $Q$ and the plane containing $C_2$ satisfies $\text{CB}(2)$, hence has length at least $4$. Thus $P \cap l$ has length   $6$. Similarly, the length of $P \cap C_2$ has length at least $7$. If $C_2 \cap P$
 has length $=d+7$ and $C_1 \cap P$ has length at most $16-d$ for some non-negative integer $d$, then one can see that $P$ fails to satisfy $\text{CB}(5)$. Thus in that case the length of $P$ has to be at least $30$. Similarly if the case $(IV)$ occurs, then one can show that the length of $P$ has to be at least $30$.\\ 
 
 Let us assume that case $(I)$ occurs, i.e., $P$ is contained in a union of a rational cubic $C_1$ and a pair of skew lines $l_1$ and $l_2$. As in the previous case we have, $l_i \cap P$ has length $6$ for $i= 1, 2$. Since a line intersects a rational cubic in a subscheme of length at most $2$, one can show that if $P \cap C_1$ has length at most $13$, then $P$ fails to satisfy $\text{CB}(5)$. On the other hand, if $P \cap C_1$ has length $=14$, then $P$ satisfy $\text{CB}(5)$ only if $C_1$ intersects both the lines $l_1$ and $l_2$ in a subscheme of length $2$.\\
 Let us assume that the case $(III)$ occur, i.e., $P$ is contained in a union of a rational cubic $C_1$ and a degree two plane curve $C_2$. 
 In this case to  satisfy $\text{CB}(5)$, either $P \cap C_1$ has length at least $16$ or $P \cap C_2$ has length at least $11$. If $P \cap C_1$ has length $16$ and $P \cap C_2$ has length at most $10$, then $P$ fails to satisfy $\text{CB}(5)$. Similarly, if $P \cap C_2 $ has length $11$ and $P \cap C_1$ has length at most $15$, then $P$ fails to satisfy $\text{CB}(5)$. \\
 If $P \cap C_1$ has length $\ge 17$ and $P \cap C_2$ has length at most $8$, then $P$ fails to satisfy $\text{CB}(5)$ and if $P \cap C_2$ has length $= 9$, then one can find a quadratic hypersurface containing $C_1$ and a subscheme of length $2$ of $P \cap C_2$. Thus the residual has length $7$ and satisfies $\text{CB}(3)$ hence contained in a line, a contradiction. Similarly if $C_2 \cap P$ has length at least $12$ and $C_1 \cap P$ has length at most $14$, then one can get a contradiction.\
 
 Thus we have the following conclusion:
 If case $(II)$ or $(IV)$ occurs, then $P$ has length at least $30$. If case  $(III)$
 occurs, then $P$ has length at least $27$ and if case $(I)$ occurs, then either $P$ has length at least $27$ or $P$ has length $26$ and both lines occuring in case $(I)$ intersect the rational cubic curve in a subscheme of length $2$. 
 
 \end{remark}


    \section{Existence of non-trivial  sections of a general point of the moduli space for small second Chern class}\label{S3}

Let $S$ be a smooth irreducible projective surface over the field of complex numbers.
Let, $\mathcal{M}(r, H, c_2)$ be the moduli space of rank $r$, $H$-stable vector bundles with fixed determinant $H$ and second
Chern class $c_2$, where $H:= c_1(\mathcal{O}_S(1))$. Consider a point $E \in \mathcal{M}(r, H, c_2)$.
Then the obstruction theory is controlled by 
\[
 \text{Obs}(E) := H^2(S, \text{End}^0(E)).
\]
Here, $\text{End}^0(E):= \text{Ker}(\text{tr}: \text{End}(E) \longrightarrow \mathcal{O}_S).$\
\
\text{By Serre duality},
\[
 H^2(S, \text{End}^0(E)) \cong H^0(S, \text{End}^0(E) \otimes K_S)^*.
\]
So, $\text{Obs}(E) \ne \{0\}$ if and only if there exists a non-zero element $\varphi \in H^0(S, \text{End}^0(E) \otimes K_S)$.
In other words, there exists a twisted endomorphism 
\[
 \varphi: E \longrightarrow E \otimes K_S, \text{with } \text{tr}(\varphi) = 0. 
\]
A bundle $E$ is said to be obstructed if $\text{Obs}(E) \ne \{0\}$, otherwise we call $E$ to be unobstructed. 

Here we only consider the case when $r=2$. 

Let $E \in \mathcal{M}(2, H, c_2)$  be a point which fits into an exact sequence 
\begin{equation}\label{e1}
0 \to L \to E \to \mathcal{J}_P \otimes L^{\prime} \to 0,
\end{equation} 
where $P$ is a zero-dimensional subscheme of $S$ and  $\mathcal{J}_P$ denotes the ideal sheaf of $P$.
Then we have $\text{det}(E) \cong L \otimes L^{\prime}$, 
\[
E^* \otimes L^{\prime} \cong E \otimes L^*,
\]
\[
\text{End}^0(E) \otimes L \otimes L^{\prime} \cong \text{Sym}^2(E),
\]
and there is an exact sequence 
\begin{equation}\label{e2}
0 \to E \otimes (L^{\prime})^* \to \text{End}^0(E) \to \mathcal{J}_P^2 \otimes L^{\prime} \otimes L^* \to 0.
\end{equation}
 
From now on we specialize to the case when $S$ is a very general sextic surface in $\mathbb{P}^3$.  
 
Let  $S \subset \mathbb{P}^3$ be a  very general, smooth surface of degree $6$. Then the canonical line bundle $K_S \simeq \mathcal{O}_S(2)$.
We denote by $\mathcal{M}(H, c_2)$ the moduli space of rank 2, $  H- $ stable vector bundles $E$ on $S$,
with respect to $H := \mathcal{O}_S(1)$ and with $c_1(E)=H$ and $c_2(E) =c_2$.
 Note that in this case the stability and semistability are same. The expected dimension of $\mathcal{M}(H, c_2)$ is
 $4c_2 -c_1^2 - 3\chi(\mathcal{O}_S) = 4c_2 -39$. In this section we will show that a general member of each irreducible component of the moduli space admits a non-trivial section for low second Chern classes. More precisely we will prove the following Theorem,
 \begin{theorem}\label{TZ}
 If $5 \le c_2 \le 18$, then a general point $E$ of each irreducible component of   $ \mathcal{M}(H, c_2)$   admits a non-zero section. Further more a general element in each component of  the subspace $\{E \in \mathcal{M}(H, c_2): h^0(E)= 0\}$ is smooth. 
 \end{theorem}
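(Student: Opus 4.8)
The plan is to detect a section of $E$ by passing to the twist $E(1)$ and feeding its zero scheme into the Cayley--Bacharach classification of Section~\ref{S2}. First I would record the numerics: Riemann--Roch on $S$, with $\chi(\mathcal{O}_S)=11$, $H^2=6$ and $K_S=\mathcal{O}_S(2)$, gives $\chi(E)=19-c_2$, which is $\ge 1$ precisely when $c_2\le 18$. Since $E^{*}\cong E(-1)$, Serre duality gives $h^2(E)=h^0(E(1))$, so if $h^0(E)=0$ then $h^2(E)=19-c_2+h^1(E)\ge 1$ and $E(1)$ carries a non-zero section $s$. Because $\operatorname{Pic}(S)=\mathbb{Z}H$ and $E$ is stable, $s$ has no divisorial zeros: a component $kH$ with $k\ge 2$ would destabilize $E(1)$ (whose slope is $9$), while $k=1$ would factor $s$ through a sub-line-bundle $\mathcal{O}_S(1)\hookrightarrow E(1)$, i.e.\ a section of $E$, contradicting $h^0(E)=0$. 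Hence $s$ vanishes on a zero-dimensional, locally complete intersection subscheme $Z$ with $|Z|=c_2(E(1))=c_2+12$ (reduced for a general choice of $s$; see below), and the Serre sequence $0\to\mathcal{O}_S\to E(1)\to\mathcal{J}_Z(3)\to 0$ forces $Z$ to satisfy Cayley--Bacharach for $K_S\otimes\mathcal{O}_S(3)=\mathcal{O}_S(5)$, i.e.\ $\mathrm{CB}(5)$.

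The key observation is that $h^0(E)=0$ prevents $Z$ from lying on a quadric. Twisting the Serre sequence by $\mathcal{O}_S(-1)$ yields $0\to\mathcal{O}_S(-1)\to E\to\mathcal{J}_Z(2)\to 0$; since $H^0(\mathcal{O}_S(-1))=H^1(\mathcal{O}_S(-1))=0$, this gives $H^0(E)\cong H^0(\mathcal{J}_Z(2))$, and because $h^0(S,\mathcal{O}_S(2))=h^0(\mathbb{P}^3,\mathcal{O}(2))$, the vanishing $h^0(E)=0$ says exactly that no quadric contains $Z$. Now Theorem~\ref{TY} applies: a reduced l.c.i.\ subscheme of length $c_2+12$ satisfying $\mathrm{CB}(5)$ and lying on no quadric must be of type (I)--(V), which force $|Z|\ge 27$ in cases (I),(II) and $|Z|\ge 30$ in (III)--(V). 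For $5\le c_2\le 14$ one has $|Z|=c_2+12\le 26$, so no such $Z$ exists: the locus $\{h^0(E)=0\}$ is empty and every bundle has a section. For $15\le c_2\le 18$ the configurations are numerically permitted, and here I would argue by dimension. Since $S$ carries no curve of degree $<6$, the rational cubic, the lines, and the plane curves of (I)--(V) are honest subvarieties of $\mathbb{P}^3$ meeting $S$ in finitely many points, so the points of $Z$ are chosen among the intersections $(\text{curve})\cap S$ and the family of admissible $Z$ is bounded by the relevant Hilbert schemes in $\mathbb{P}^3$ (a twisted cubic moves in $12$ parameters, a line in $4$). Combining this bound with $\dim\mathbb{P}\operatorname{Ext}^1(\mathcal{J}_Z(3),\mathcal{O}_S)$ and subtracting the fibre $\mathbb{P}H^0(E(1))$ of the map $(E,s)\mapsto E$, I would show $\dim\{h^0(E)=0\}<\dim\mathcal{M}(H,c_2)=4c_2-39$, so the general $E$ has a section.

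For the smoothness assertion I would use the sequence~\eqref{e2} for $E(1)$, namely $0\to E(-2)\to\operatorname{End}^0(E)\to\mathcal{J}_Z^{2}(3)\to 0$. Twisting by $K_S=\mathcal{O}_S(2)$ and taking $H^0$, the hypothesis $h^0(E)=0$ yields an injection $H^0(\operatorname{End}^0(E)\otimes K_S)\hookrightarrow H^0(\mathcal{J}_Z^{2}(5))$, so $E$ is unobstructed once $H^0(\mathcal{J}_Z^{2}(5))=0$. The latter is the space of quintics singular along $Z$: each reduced point of $Z$ imposes $3$ conditions on $\mathcal{O}_S(5)$, and as $h^0(S,\mathcal{O}_S(5))=56$ the expected dimension $56-3(c_2+12)=20-3c_2$ is strictly negative for $c_2\ge 15$. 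Hence for a general member of the (possibly non-empty) locus $\{h^0(E)=0\}$ the double-point conditions are independent, $H^0(\mathcal{J}_Z^{2}(5))=0$, and $\operatorname{Obs}(E)=H^2(\operatorname{End}^0(E))=0$; for $c_2\le 14$ the statement is vacuous since the locus is empty.

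The main obstacle is the window $15\le c_2\le 18$. There Theorem~\ref{TY} no longer excludes $Z$ outright, so the dimension count must be made genuinely effective, tallying configuration by configuration the parameters of the curves, the choice of points of $Z$ among the finitely many intersection points with $S$, and the extension space, all against $4c_2-39$. In parallel one must ensure that a general member of the no-section locus produces a \emph{reduced} $Z$, so that Theorem~\ref{TY} is applicable; this is comfortable when $h^0(E(1))=h^2(E)\ge 2$ (i.e.\ $c_2\le 17$) via a Bertini argument on the linear system spanned by $s$, but the borderline case $c_2=18$, where $h^0(E(1))$ can drop to $1$, is the most delicate point and may require a direct specialization or degeneration argument.
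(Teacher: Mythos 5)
Your proposal follows essentially the same route as the paper's proof of the existence part: use $\chi(E)=19-c_2>0$ and Serre duality ($h^2(E)=h^0(E(1))$, since $E^*\cong E(-1)$) to produce, when $h^0(E)=0$, the sequence $0\to\mathcal{O}_S(-1)\to E\to\mathcal{J}_Z(2)\to 0$ with $Z$ a locally complete intersection of length $c_2+12$ satisfying $\mathrm{CB}(5)$; invoke Theorem~\ref{TY}; conclude outright for $c_2\le 14$ since $c_2+12\le 26<27$; and finish the window $15\le c_2\le 18$ by a dimension count against $4c_2-39$ (the paper bounds the parameter space of admissible $Z$ by $20$ in cases (I)--(II) and by $24$ in the length-$30$ cases (III)--(V), which as you note arise only at $c_2=18$, versus $4c_2-39\ge 21$, resp.\ $33$). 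You are actually more explicit than the paper at two points it leaves implicit: that $h^0(E)=0$ is precisely the hypothesis $h^0(\mathcal{J}_Z(2))=0$ needed for Theorem~\ref{TY} (via $h^0(\mathcal{O}_S(-1))=h^1(\mathcal{O}_S(-1))=0$), and that stability rules out divisorial zeros of $s\in H^0(E(1))$. Your planned count also correctly keeps track of $\dim\mathbb{P}\operatorname{Ext}^1(\mathcal{J}_Z(3),\mathcal{O}_S)$ and the fibre $\mathbb{P}H^0(E(1))$, corrections which the paper's cruder comparison suppresses; and your concern about reducedness of $Z$ is legitimate --- the paper dismisses it in one line by intersecting the $\mathrm{CB}(5)$ locus in $\operatorname{Hilb}^{c_2+12}$ with the open locus of reduced subschemes, with no special treatment of $c_2=18$.

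The genuine divergence is the smoothness assertion, which the paper's displayed proof does not address at all; the mechanism you propose --- the injection $H^0(\operatorname{End}^0(E)\otimes K_S)\hookrightarrow H^0(\mathcal{J}_Z^{2}(5))$ coming from sequence~\eqref{e2} --- is the one the paper deploys only later, in Section~\ref{S5}, for $c_2\ge 12$. But your final step there has a gap: from the negativity of the expected dimension $56-3(c_2+12)$ you infer that ``for a general member of the locus the double-point conditions are independent.'' Negative expected dimension does not imply vanishing, and $Z$ is never a general configuration here: on the locus $\{h^0(E)=0\}$, Theorem~\ref{TY} forces $Z$ onto a twisted cubic together with lines or conics, so genericity within that locus cannot yield general position in $S$. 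The vanishing $H^0(\mathcal{J}_Z^{2}(5))=0$ should instead be attacked through the constrained geometry itself, e.g.\ by Bezout on the curves: a quintic singular at $\ge 8$ points of a twisted cubic, $\ge 6$ points of a conic, or $\ge 3$ points of a line must contain that curve, which reduces the question to quintics containing the entire configuration and still singular at the points of $Z$, where the conditions must be re-counted on the smaller linear system. As written, this step is asserted rather than proved, and it is the one place where your argument (unlike the existence part, which matches the paper) does not yet close.
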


	
 \begin{proof}
 Note that, the Euler characteristic of a point $E \in \mathcal{M}(H, c_2)$ is equal to $19-c_2$ which is $ > 0$ for $c_2 \le 18$. Thus $h^0(S, E) + h^2(S,E) = h^0(S, E) + h^0(S, E(1)) > 0$ for  $c_2 \le 18$.  In particular, $H^0(S, E(1)) \ne 0$.    Let us assume $E$ does not have any section. Then we have the following
 exact sequence:
 \begin{equation}\label{e6}
 0 \to \mathcal{O}_S(-1) \to E \to \mathcal{J}_P(2) \to 0
 \end{equation} 
 where, $\mathcal{J}_P$ is the ideal sheaf of a zero-dimensional subscheme $P$ of length $12 + c_2$. Since $E$ is locally free, $P$ is locally complete intersection and satisfies $\text{CB}(5)$. \\
 Thus by Theorem \ref{TY},  we have the following possibilities :

$(I)$ $P$ is contained in a union of a rational cubic curve and a pair of skew lines.\\ 
$(II)$ $P$ is contained in a union of a rational cubic curve and a plane curve of degree $2$.\\
$(III)$ $P$ is contained in a union of a rational cubic curve, a line and a degree $2$ plane curve. \\
$(IV)$ $P$ is contained in a union of a rational cubic curve and a plane curve of degree $3$.\\

Let us assume the length of $P$ is different from $26$, which case we will consider separately. Then by Remark \ref{R11},
the first two cases occur when the length of $P \ge 27$ and last two cases occur when the length of $P \ge 30$. \\
Thus if $c_2 \le 13$, then every element $E \in \mathcal{M}(H, c_2)$ admits a section.\\
Let $15 \le c_2 \le 17$. Then  only the first two cases can occur.  Note that, $S$ intersects a rational cubic curve  in a subscheme of length $18$  and a degree $2$ curves or a pair of skew lines in a subscheme of length $12$.  Thus given a rational cubic curve and a pair of skew lines or a degree $2$ curve, there are only finitely many choices of $P$. On the other hand the dimension of rational cubic curves in $\mathbb{P}^3$ is $12$ and degree $2$ plane curves is $8$. Thus the dimension of such $P$'s is at most $20$, but the dimension of every irreducible component of $\mathcal{M}(H, c_2)$ is at least $4c_2 -39$ which is $> 20$ for $c_2 \ge 15$, which proves the claim in this range.\\
Let $c_2 = 30$. Then the dimension of such subschemes is at most $12 +12= 24$ and the dimension of every component of $\mathcal{M}(H, c_2)$ is at least $33$. 
Thus a general element of each of the irreducible components of $\mathcal{M}(H, c_2)$ admits a non-zero section.\\
Let us consider the case when $c_2= 14$. In that case, only first case can occur, that is, the subscheme $P$ is contained in a union of a rational cubic curve and a pair of skew lines each of which intersects the rational cubic curve in a subscheme of length $2$. Note that, dimension of lines intersecting a rational cubic curve in a subscheme of length $2$ is $2$, which can be seen in the following way:\\
Let $C_1$ be a rational cubic curve. \\
Set $Y:= \{(H, l) \text{ where } H \in \mathbb{P}(H^0(\mathcal{O}_S(1))), l \subset H, \text{ a line intersecting } C_1 \text{ in a subscheme of length $2$ } \}$\\
and let $W$ be the space of lines intersecting $C_1$ in a subscheme of length $2$. Then there is surjective projection map $Y \to W$. Note that, given a plane there are only finitely may such lines. Thus the dimension of $Y = 3$. On the other hand, given a point $l \in W$, there are one dimensional planes containing it.\\
Thus the dimension of $W =2$. Thus the dimension of pair of skew lines is at most $4$. Therefore, the dimension of such $P$'s is at most $16$, but the dimension of every irreducible component of $\mathcal{M}(H, 14)$ is at least $17$ which is $> 16$.\


      \end{proof}

 \section{Mestrano-Simpson conjecture}\label{S4}
    In this section we will prove the Mestrano-Simpson conjecture partially. Let us recall few results from \cite{SIM2}  and the Mestrano-Simpson conjecture.\\
     \begin{theorem}\label{t1}
     The space of bundles $E$ fitting into an exact sequence of the form
\[
 0 \to \mathcal{O}_S \to E \to \mathcal{J}_P(1) \to 0
 \]    
      where,  $P$ is a length $11$ subscheme of $C \cap S $ for $C$ a rational normal cubic in
$\mathbb{P}^3$, consists of a single $12-$dimensional generically smooth irreducible component
of the moduli space $\mathcal{M}(H, 11)$ of stable bundles 
on $S$.
 \end{theorem}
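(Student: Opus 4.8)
The plan is to realize the asserted component as the closure of an explicit $12$-dimensional family, to verify that its general member is a stable bundle of the right invariants, and then to prove that this family is an \emph{entire}, generically smooth component by showing that the Zariski tangent space to $\mathcal M(H,11)$ at such a bundle has dimension exactly $12$. The twisted cubics $C\subset\mathbb P^3$ form an irreducible family of dimension $12$, and for a very general sextic $S$ and a general such $C$ one has $C\not\subset S$, so $C\cap S$ consists of $18$ reduced points, from which we select a length-$11$ subscheme $P$. As explained in the Notation section, $P$ automatically satisfies $\mathrm{CB}(3)$: cubics cut out on $C\cong\mathbb P^1$ the complete system $|\mathcal O_{\mathbb P^1}(9)|$, and no nonzero section of $\mathcal O(9)$ can vanish at the $10$ points of $P\setminus\{p\}$, so the Cayley--Bacharach condition is vacuous. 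Hence the Serre construction produces a locally free extension $0\to\mathcal O_S\to E\to\mathcal J_P(1)\to 0$ with $c_1(E)=H$ and $c_2(E)=11$. The crucial point is that this extension is essentially unique: from $0\to\mathcal J_P(3)\to\mathcal O_S(3)\to\mathcal O_P\to 0$ and $h^1(\mathcal O_S(3))=0$ one gets $h^1(\mathcal J_P(3))=11-h_P(3)$, where $h_P(3)$ is the number of conditions imposed by $P$ on cubics; since the $11$ points lie on $C$ they impose $10$ independent conditions, so $h^1(\mathcal J_P(3))=1$, and by Serre duality $\mathrm{Ext}^1(\mathcal J_P(1),\mathcal O_S)\cong H^1(\mathcal J_P(3))^\ast$ is one-dimensional. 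Thus $E$ is determined by $P$ up to isomorphism.

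Next I would pin down the dimension, stability, and irreducibility. Since no plane contains $11$ points of $C$, we have $h^0(\mathcal J_P(1))=0$, and the defining sequence gives $h^0(E)=1$; the unique section of $E$ then recovers $P$ as its zero scheme, so the assignment $P\mapsto E$ is injective. Consequently the family of bundles has the same dimension, namely $12$, as the family of admissible $P$ (the $12$ moduli of $C$, the choice of the $11$ points being finite). For stability, twist the defining sequence by $\mathcal O_S(-1)$ to get $0\to\mathcal O_S(-1)\to E(-1)\to\mathcal J_P\to 0$; since $H^0(\mathcal O_S(-1))=0=H^0(\mathcal J_P)$ we obtain $H^0(E(-1))=0$. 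A destabilizing sub-line bundle would be $\mathcal O_S(k)$ with $6k\ge\mu(E)=3$, i.e.\ $k\ge 1$, and $H^0(E(-1))=0$ rules these out, so $E$ is stable. Irreducibility of the family reduces to irreducibility of the incidence variety $\{(C,P)\}$ over the irreducible base of twisted cubics; this follows once one knows that the monodromy on the $18$ points of $C\cap S$, as $C$ varies, is the full symmetric group and hence acts transitively on $11$-element subsets.

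Finally, to see that this $12$-dimensional irreducible family is a whole, generically smooth component, I would compute $H^1(\mathrm{End}^0 E)$. Since $q(S)=0$ and $E$ is stable (hence simple), Riemann--Roch and the expected dimension give $h^1(\mathrm{End}^0 E)-h^2(\mathrm{End}^0 E)=4c_2-39=5$, so it suffices to prove $h^2(\mathrm{End}^0 E)=7$. By Serre duality $h^2(\mathrm{End}^0 E)=h^0(\mathrm{End}^0 E\otimes K_S)$, which I would evaluate using the symmetric-square sequence \eqref{e2}; in the present case ($L=\mathcal O_S$, $L'=\mathcal O_S(1)$) it reads $0\to E(-1)\to\mathrm{End}^0 E\to\mathcal J_P^2(1)\to 0$, and twisting by $K_S=\mathcal O_S(2)$ yields $0\to E(1)\to\mathrm{End}^0 E\otimes K_S\to\mathcal J_P^2(3)\to 0$. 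A direct count gives $h^0(E(1))=h^0(\mathcal O_S(1))+h^0(\mathcal J_P(2))=4+3=7$, since the only quadrics through $P$ are the three quadrics through $C$. Hence, once one knows $h^0(\mathcal J_P^2(3))=0$, it follows that $h^0(\mathrm{End}^0 E\otimes K_S)=7$ and therefore $h^1(\mathrm{End}^0 E)=12$; as the family is $12$-dimensional and lies in $\mathcal M(H,11)$ whose tangent space at $[E]$ is then $12$-dimensional, the family is dense in a single $12$-dimensional component and $\mathcal M(H,11)$ is smooth at its general point.

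I expect the vanishing $h^0(\mathcal J_P^2(3))=0$ to be the main obstacle: it asserts that no cubic surface can be singular at all $11$ points of $P$. The decisive input is that such a cubic would meet the twisted cubic $C$ in a divisor of degree $\ge 2\cdot 11=22>9$, forcing it to contain $C$; one must then check that within the $10$-dimensional system of cubics through $C$ the additional singularity conditions along $P$ cannot all be satisfied for a general configuration. Controlling this vanishing, together with the transitivity of the monodromy used for irreducibility, is precisely what forces the tangent space to come out as exactly $12$ rather than larger, and is thus the heart of the argument.
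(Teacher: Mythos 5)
The paper itself does not prove this statement: its ``proof'' of Theorem \ref{t1} is the single citation to Mestrano--Simpson \cite[Corollary 11.3]{SIM2}, so there is no internal argument to compare against, and your proposal should be judged as a reconstruction of the cited result. As such it is essentially correct and follows the natural route (which is also, in outline, the route of the cited source): $\mathrm{CB}(3)$ holds because a cubic through $10$ of the points meets $C$ in degree $>9$ and hence contains $C$; $h^1(\mathcal{J}_P(3))=11-10=1$ makes the Serre extension unique, so $E$ is determined by $P$; $h^0(\mathcal{J}_P(1))=0$ gives $h^0(E)=1$, so $E$ conversely determines $P$ as the zero scheme of its unique section, and the family has dimension $12$ (the moduli of twisted cubics, with finitely many choices of $P\subset C\cap S$); stability follows from $H^0(E(-1))=0$ together with $\mathrm{Pic}(S)=\mathbb{Z}H$; and the obstruction count $h^2(\mathrm{End}^0E)=h^0(E(1))+h^0(\mathcal{J}_P^2(3))=7+0$ combined with $h^1-h^2=4c_2-39=5$ gives a $12$-dimensional tangent space, which pins the family as a full, generically smooth component. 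All your numerical steps check out (e.g.\ $h^0(\mathcal{J}_P(2))=3$ since a quadric through $11>6$ points of $C$ contains $C$).

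The step you leave open, $h^0(\mathcal{J}_P^2(3))=0$, is indeed the crux, and it can be closed cleanly rather than merely ``checked'': as you note, a cubic $F$ singular at the $11$ points meets $C$ with multiplicity at least $22>9$, so $F\in H^0(\mathcal{I}_C(3))$; for such $F$ the differential $dF$ along $C$ defines a section of $N^*_{C/\mathbb{P}^3}(3)$, and $F$ is singular at $p\in C$ exactly when this section vanishes at $p$. Since $N_{C/\mathbb{P}^3}\cong\mathcal{O}_{\mathbb{P}^1}(5)^{\oplus 2}$, we have $N^*_{C/\mathbb{P}^3}(3)\cong\mathcal{O}_{\mathbb{P}^1}(4)^{\oplus 2}$, and a section vanishing at $11>4$ points is zero; hence $F\in H^0(\mathcal{I}_C^2(3))$ (symbolic and ordinary square agree as $C$ is smooth), which vanishes because $\mathcal{I}_C^2$ is generated in degree $4$. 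With that inserted, the only remaining unproved ingredient is the monodromy claim (transitivity on $11$-element subsets of the $18$ points of $C\cap S$ as $C$ varies), which you assume; this is the same standard of rigor the paper itself applies in its other irreducibility arguments, but strictly speaking it is needed for the word ``single'' in the statement and deserves at least a reference or a degeneration argument.
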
  
 \begin{proof}
See  \cite[Corollary 11.3]{SIM2}.
\end{proof}
\begin{theorem}\label{t2}
 The space of bundles $E$ fitting into an exact sequence of the form
\[
 0 \to \mathcal{O}_S \to E \to \mathcal{J}_P(1) \to 0
 \]    
      where,  $P$ is a length $11$ subscheme of $H \cap S $ for $H$ a hyperplane, general with respect to $S$ in
$\mathbb{P}^3$, is contained in an  irreducible component of dimension $\ge 13$ of the moduli space $\mathcal{M}(H, 11)$ of stable bundles 
on $S$.
 \end{theorem}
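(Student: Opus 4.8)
The plan is to realise every such bundle through the Serre construction and then to run a dimension count on the parameter space of the data $(H,P)$, controlling the fibres of the induced classifying map to $\mathcal{M}(H,11)$. First I would set up the Serre correspondence: fix a general hyperplane $H$ and $11$ general points $P \subset C := S \cap H$. Since cubics in $\mathbb{P}^3$ restrict surjectively onto plane cubics in $H \cong \mathbb{P}^2$ and $11$ general points of $\mathbb{P}^2$ impose independent conditions on the $10$-dimensional space of plane cubics, one checks that $h^0(\mathcal{J}_{P}(3)) = h^0(\mathcal{J}_{P\setminus\{p\}}(3))$ for every $p \in P$, i.e. $P$ satisfies $\mathrm{CB}(3)$. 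As $K_S \otimes \mathcal{O}_S(1) = \mathcal{O}_S(3)$, the Serre construction produces a locally free extension
\[
0 \to \mathcal{O}_S \to E \to \mathcal{J}_P(1) \to 0,
\]
and a Serre-duality computation gives $\dim \mathrm{Ext}^1(\mathcal{J}_P(1),\mathcal{O}_S) = h^1(\mathcal{J}_P(3)) = 11 - h_P(3) = 1$, so $E$ is determined up to isomorphism by $P$. Stability follows since $\mu(E)=3$ forces any destabilizing sub-line-bundle to be $\mathcal{O}_S(k)$ with $k \ge 1$, while $h^0(E(-1)) \le h^0(\mathcal{O}_S(-1)) + h^0(\mathcal{J}_P) = 0$; hence $E \in \mathcal{M}(H,11)$.

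Next comes the dimension count. The pairs $(H,P)$ form an irreducible variety $\mathcal{P}$: it fibres over $(\mathbb{P}^3)^\ast$ (dimension $3$) with fibre $\mathrm{Sym}^{11}(C)$ (dimension $11$), so $\dim \mathcal{P} = 14$. By the previous paragraph there is a classifying rational map $\Phi : \mathcal{P} \dashrightarrow \mathcal{M}(H,11)$, $(H,P) \mapsto E$, whose image is irreducible. It therefore remains to show that the generic fibre of $\Phi$ has dimension exactly $1$, which will give $\dim \overline{\mathrm{Im}\,\Phi} = 14 - 1 = 13$ and place $\mathrm{Im}\,\Phi$ inside a component of $\mathcal{M}(H,11)$ of dimension $\ge 13$.

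The key step is the fibre computation, and here the geometry of the pencil of sections does the work. Using $H^1(\mathcal{O}_S)=0$ and $h^0(\mathcal{J}_P(1))=1$ (the unique section being $H$ itself), the defining sequence gives $h^0(E) = 2$. Choosing a basis $s_0,s_1$ of $H^0(E)$, the element $s_0 \wedge s_1 \in H^0(\det E) = H^0(\mathcal{O}_S(1))$ is nonzero, since $s_0,s_1$ cannot be proportional on the integral surface $S$; thus it cuts out a hyperplane section $D = S \cap H_0$. If $s = s_0 + \lambda s_1$ vanishes at a point $x$, then $s_0(x)$ and $s_1(x)$ are proportional, whence $x \in D$; so the zero scheme of \emph{every} section in the pencil lies on the single hyperplane $H_0$. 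Comparing with $Z_{s_0} = P \subset H$ (eleven general points of $H$ cannot lie on the line $H \cap H_0$) forces $H_0 = H$. Consequently $\Phi^{-1}(E)$ is exactly the locus of sections in $\mathbb{P}(H^0(E)) \cong \mathbb{P}^1$ whose zero scheme has length $11$, together with the now-determined hyperplane — a dense open subset of $\mathbb{P}^1$, of dimension $1$.

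I expect the main obstacle to be precisely this last point: proving the fibre is one-dimensional rather than finite. It rests on the observation that all sections of $E$ automatically have coplanar zero schemes, which is what makes these bundles special; verifying the accompanying genericity hypotheses ($h_P(3)=10$, local freeness and stability on a dense open subset of $\mathcal{P}$, and $Z_s$ staying of length $11$ for general $\lambda$) is routine but must be done with care. Finally, since $\mathcal{P}$ is irreducible, $\mathrm{Im}\,\Phi$ is an irreducible constructible subset of dimension $13$, so its closure lies in a single irreducible component of $\mathcal{M}(H,11)$, which therefore has dimension at least $13$, as claimed.
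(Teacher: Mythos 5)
Your proposal is correct in substance, but note that the paper does not actually prove this statement: its ``proof'' is the citation to \cite[Theorem 11.4]{SIM2}, so your write-up is a self-contained reconstruction of the quoted Mestrano--Simpson result. Your route --- Serre construction from CB(3) for $11$ general points of $S\cap H$, the counts $h^0(\mathcal{J}_P(3))=10$ and $h^1(\mathcal{J}_P(3))=1$ so that $P$ determines $E$ up to isomorphism, stability from $h^0(E(-1))=0$, and the dimension count $3+11-1=13$ with the fibre of the classifying map identified with the pencil $\mathbb{P}(H^0(E))$ --- is exactly the counting scheme the paper itself reproduces in related places (the $13$-dimensional subset $\mathcal{M}_2^1$ in Proposition \ref{NP1}, and the estimate $2c_2-9$ in Section \ref{S5}), and your coplanarity argument via $s_0\wedge s_1\in H^0(\mathcal{O}_S(1))$ is a nice explicit justification of the ``$-1$'' for the pencil of sections, which the paper only asserts. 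Note also that for the stated bound $\ge 13$ you do not even need the fibre to be exactly one-dimensional: a length-$11$ subscheme cannot lie on a line (a line meets $S$ in a scheme of length $6$), so $H'$ is determined by $P'$ and the fibre always injects into $\mathbb{P}(H^0(E))\cong\mathbb{P}^1$.

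Three points should be tightened. (i) ``$11$ general points of $\mathbb{P}^2$ impose independent conditions on the $10$-dimensional space of plane cubics'' is literally impossible; you mean that any $10$ of them already impose $10$ independent conditions, and this generality must be verified for points constrained to the curve $C=S\cap H$ rather than free in the plane (true, since the restriction of $|\mathcal{O}_H(3)|$ to $C$ is injective and base-point free, so general points on $C$ impose conditions one at a time, but it is a separate check). (ii) $s_0\wedge s_1\ne 0$ does not follow merely from $s_0,s_1$ being non-proportional: if $s_0\wedge s_1\equiv 0$, the two sections generate a rank-one subsheaf whose saturation is $\mathcal{O}_S(k)\hookrightarrow E$ with $h^0(\mathcal{O}_S(k))\ge 2$, forcing $k\ge 1$ and contradicting stability via $h^0(E(-1))=0$; that stability step needs to be said. (iii) The statement quantifies over all length-$11$ subschemes $P\subset H\cap S$, whereas your map $\Phi$ is defined only on the generic locus where $h^1(\mathcal{J}_P(3))=1$; bundles arising from special $P$ (e.g.\ $P$ on an irreducible plane cubic, where $h^1(\mathcal{J}_P(3))=2$) constitute precisely the locus $\mathcal{M}_2^2$ that the paper must handle separately in Proposition \ref{NP1}, and whether they lie in the closure of your $13$-dimensional family is exactly the open part of Conjecture \ref{NC1}. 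So your argument establishes the statement in the form in which the paper uses it --- the generic such bundles form an irreducible $13$-dimensional family, hence sit inside a component of dimension $\ge 13$ --- but not the literal claim for every such $P$, which would essentially settle the irreducibility of $\mathcal{M}_2$.
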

 \begin{proof}
  See  \cite[Theorem 11.4]{SIM2}.
 \end{proof}
\begin{conjecture}\label{NC1}(Mestrano-Simpson Conjecture,  [\cite[Conjecture 11.5]{SIM2})\\
The $13-$dimensional family in Theorem \ref{t2}
constitutes a full irreducible component of $\mathcal{M}(H, 11)$ ;  this component is
nonreduced and obstructed. Together with the $12-$dimensional generically smooth
component  in  Theorem \ref{t1}, these are the only irreducible
components of $\mathcal{M}(H, 11)$. In particular, $h^0(E)> 0$ for any stable bundle $E \in \mathcal{M}(H, 11)$.
\end{conjecture}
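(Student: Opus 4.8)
The whole argument rests on upgrading ``a general bundle has a section'' to ``\emph{every} bundle has a section'' in the single case $c_2=11$, and then reading off the two known families. Since $c_2=11\le 14$, the proof of Theorem~\ref{TZ} in fact establishes the stronger assertion that every $E\in\mathcal{M}(H,11)$ admits a nonzero section, which is precisely the final clause of the conjecture. Concretely, $\chi(E)=19-11=8>0$ and Serre duality give $h^0(E)+h^0(E(1))\ge 8>0$, so if $h^0(E)=0$ then a section of $E(1)$ produces the sequence~\eqref{e6} with $P$ of length $23$; this sequence forces $h^0(E)=h^0(\mathcal{J}_P(2))$, whence $P$ lies on no quadric, and Theorem~\ref{TY} then demands $|P|\ge 27>23$, a contradiction. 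Thus $h^0(E)>0$ for every stable $E\in\mathcal{M}(H,11)$.

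\textbf{Covering.} For each $E$ a nonzero section places it in
\[
0\to\mathcal{O}_S\to E\to\mathcal{J}_P(1)\to 0
\]
with $P$ a length-$11$ local complete intersection satisfying CB$(3)$, i.e.\ Cayley--Bacharach for $K_S\otimes\det E=\mathcal{O}_S(3)$. I would classify such $P$. First, CB$(3)$ forces $P$ to fail to impose independent conditions on cubics, since otherwise deleting a point would strictly raise $h^0(\mathcal{J}_P(3))$, contradicting the defining equality; thus $h^0(\mathcal{J}_P(3))\ge 10$. If $P$ is in linearly general position, the Castelnuovo lower bound combined with this inequality forces equality in Castelnuovo's lemma (Theorem~\ref{CN}, with $n=d=3$ and $|P|=11\ge 9$), whence $P$ lies on a rational normal cubic and $E\in\mathcal{M}_1$ by Theorem~\ref{t1}. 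If $P$ is not in linearly general position, some plane meets $P$ in at least $4$ points, and running the Castelnuovo restriction sequence together with Chasles' theorem (Theorem~\ref{T}, Remark~\ref{R}) exactly in the style of the case analysis of Section~\ref{S2} shows that all of $P$ is coplanar, so $E\in\mathcal{M}_2$ by Theorem~\ref{t2}. Therefore $\mathcal{M}(H,11)=\mathcal{M}_1\cup\mathcal{M}_2$.

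\textbf{The dimension of $\mathcal{M}_2$.} The plane family is parametrized by a plane $H$ ($3$ moduli) together with $11$ points on the plane sextic $H\cap S$ ($11$ moduli), a $14$-dimensional datum carrying an essentially unique extension, as $\dim\text{Ext}^1(\mathcal{J}_P(1),\mathcal{O}_S)=1$ for general such $P$. Since the generic bundle so produced satisfies $h^0(E)=1+h^0(\mathcal{J}_P(1))=2$, each $E$ arises from a $\mathbb{P}^1$ of sections, so the map from this $14$-dimensional space to the moduli space has $1$-dimensional fibres; this yields $\dim\mathcal{M}_2=13$, matching the lower bound of Theorem~\ref{t2}. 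The locus of $11$ general points of $H\cap S$ is irreducible of dimension $13$, whereas special planar configurations could a priori organize into at most one further irreducible component, of dimension $12$.

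\textbf{Main obstacle.} What remains open, and what separates this partial result from the full conjecture, is the \emph{irreducibility} of $\mathcal{M}_2$ together with its asserted nonreduced and obstructed structure. The point-configuration methods control dimensions and deliver the covering, but they cannot by themselves exclude the putative extra $12$-dimensional component, nor verify that $\text{Obs}(E)=H^2(S,\text{End}^0(E))\ne 0$ generically along the plane family. Settling these would require a direct tangent--obstruction computation at a general member of $\mathcal{M}_2$, analyzing the map appearing in the sequence~\eqref{e2}; this deformation-theoretic step is the one I expect to be hardest, and it is exactly where the present approach stops short of the conjecture.
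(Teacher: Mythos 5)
Your architecture is exactly the paper's partial proof: the section-existence step (for $c_2=11$ a sectionless $E$ would give the sequence \eqref{e6} with $|P|=23$, $h^0(\mathcal{J}_P(2))=h^0(E)=0$, contradicting the bound $|P|\ge 27$ of Theorem \ref{TY}), the covering $\mathcal{M}(H,11)=\mathcal{M}_1\cup\mathcal{M}_2$, the count $\dim = 3+11-1=13$ for the main planar family using $h^1(\mathcal{J}_P(3))=1$ and $h^0(E)=2$, and the honest delimitation of what remains open (irreducibility of $\mathcal{M}_2$, nonreducedness, obstructedness) all match Section \ref{S4} and the concluding remark there, so you have correctly identified that only a partial result is provable by these methods. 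However, the two technical hearts of that partial result are asserted rather than proved in your write-up, and they are precisely where the paper does concrete work.

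First, the covering. Your linearly-general branch via Castelnuovo (Theorem \ref{CN}: $h_P(3)\le 10$ from CB$(3)$, $\ge 10$ from general position, hence equality and $P$ lies on a twisted cubic) is fine, and is a genuinely different and cleaner route than the paper's for that branch. But the degenerate branch --- ``some plane meets $P$ in at least $4$ points, and running the restriction sequence with Chasles shows all of $P$ is coplanar'' --- is pure assertion, and it is where the content lies: a priori $P$ could sit on a pair of skew lines, on a conic plus a line, or on a plane section plus a residual line, and each configuration must be killed against CB$(3)$ by an explicit quintic/cubic construction. The paper does this in Proposition \ref{PP2} by a different mechanism: if no plane contains $P$, then $h^0(E)=1$, the Euler characteristic forces $h^0(\mathcal{J}_P(2))\ge 3$, a third quadric must contain a component $C$ of the intersection of the first two, and the case analysis on $\deg C=1,2,3$ eliminates everything except the rational normal cubic. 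Second, the component count for $\mathcal{M}_2$: your claim of ``at most one further irreducible component, of dimension $12$'' is the conclusion of Proposition \ref{NP1}, for which you supply no argument. The paper needs two ingredients you omit: the stratification $\triangle_{H,i}=\{P: h^0(H,\mathcal{J}_{P,H}(3))=1+i\}$ with the incidence-variety bound $\dim\triangle_{H,i}\le 8-i$ (a $P$ on two plane cubics forces the cubic reducible, so the image of the incidence variety is at most $8$-dimensional), which caps the bundles from the strata $i\ge 1$ at dimension $11$; and the lower bound of \cite[Corollary 3.1]{SIM1} that every component of the locus of bundles arising from such extensions has dimension at least $12$, which is what prevents those deeper strata from forming components at all and leaves only $\mathcal{M}_2^1$ (dimension $13$) and the plane-cubic locus $\mathcal{M}_2^2$ (dimension $3+9+1-1=12$). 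Without that external input your assertion has no support --- nothing in a pure parameter count excludes several extra components coming from $h^1$-jumping loci. Your final paragraph correctly matches the paper's remark that $\mathcal{M}_2^2\subset\overline{\mathcal{M}_2^1}$ would settle the conjecture, but the two gaps above sit inside the part you claim, not the part you defer.
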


By theorem \ref{TZ}
 any  point $E  \in \mathcal{M}(H, 11) $ sits in an exact sequence of the form:
\begin{equation}\label{NEA}
0 \to \mathcal{O} \to E \to  \mathcal{J}_P(1) \to 0,
\end{equation}
where $P$ is a locally complete intersection zero-dimensional subscheme of length $11$ satisfying $\text{CB}(3)$.\\
Let us consider the subsets
\[
\mathcal{M}_1:= \{E \in \mathcal{M}(H, 11) :  P \text{ in } \ref{NEA}  \text{ lies on a rational cubic curve } \}
\]
and 
\[
\mathcal{M}_2:= \{E \in \mathcal{M}(H, 11) :  P \text{ in } \ref{NEA}  \text{ lies on a plane } \}
\]
Clearly $\mathcal{M}_1$ and $\mathcal{M}_2$ are closed subset of $\mathcal{M}(H, 11)$ and by Theorem \ref{t1} $\mathcal{M}_1$ is an irreducible component of dimension $12$.

 \begin{proposition}\label{PP2}
 If there is no hyperplane passing through the zero-dimensional subscheme $P$ in the exact sequence \ref{NEA}, then $P$  lies in a rational normal cubic curve.
 \end{proposition}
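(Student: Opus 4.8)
The plan is to prove Proposition \ref{PP2} by analyzing the conditions imposed by $P$ on low-degree forms, using the Cayley--Bacharach property $\text{CB}(3)$ together with the classical Castelnuovo-type results recalled in Section \ref{S1}. Since $P$ has length $11$ and lies on no hyperplane, the points of $P$ span $\mathbb{P}^3$; the goal is to force $P$ onto a rational normal cubic. First I would establish that $P$ is in linearly general position, or at least that it satisfies the hypotheses of the Castelnuovo Theorem \ref{CN} with $n=3$. Note that $11 = |P| \ge 2n+3 = 9$, so Castelnuovo applies once we verify the point-count condition on the number of independent conditions.

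The key computation is the Hilbert function value $h_P(m)$. By Castelnuovo's Theorem \ref{CN}, if the points are in linearly general position and $h_P(m) = \min\{|P|, 3m+1\}$ for the relevant $m$, then $P$ lies on a rational normal curve. For $\mathbb{P}^3$ and $m=3$ the bound is $\min\{11, 10\} = 10$, so I would aim to show $h_P(3) = 10$, i.e. that $P$ fails by exactly one to impose independent conditions on cubics. The second step is therefore to pin down $h_P(3)$: since $P$ satisfies $\text{CB}(3)$ and is not contained in a hyperplane, I would argue that $P$ imposes exactly $10$ conditions on cubics. The Cayley--Bacharach property $\text{CB}(3)$ says that any cubic through all but one point of $P$ passes through the last; this is precisely the statement that $P$ fails to impose independent conditions (otherwise such a separating cubic would exist), so $h_P(3) \le 10$, and the spanning hypothesis together with genericity should prevent a larger defect.

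The main obstacle I anticipate is verifying the linear general position hypothesis required by Castelnuovo \ref{CN}. A priori, a subscheme satisfying only $\text{CB}(3)$ and spanning $\mathbb{P}^3$ could have several points collinear or coplanar, which would violate linear general position and invalidate a direct application. To handle this I would proceed by ruling out degenerate subconfigurations: if too many points (say four) were collinear, or if a large subset ($\ge 9$ points) lay on a plane, one could combine the residuation/Castelnuovo sequence idea of Section \ref{S2} with the assumption that no hyperplane contains all of $P$ to derive a contradiction with $\text{CB}(3)$. Concretely, I would invoke Theorem \ref{T1}: if no $3k+2$ points of $P$ lie in a $k$-plane then $P$ imposes independent conditions on cubics, contradicting $\text{CB}(3)$; hence some $3k+2$ points must lie in a $k$-plane. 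Ruling out $k=0,1,2$ (using that $P$ spans $\mathbb{P}^3$, has length $11$, and satisfies $\text{CB}(3)$) should leave only the configuration compatible with a rational normal cubic.

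Once linear general position and $h_P(3)=10$ are secured, Castelnuovo's Theorem \ref{CN} applies directly with $d=|P|=11 \ge 9$, $n=3$, and $m=3$, yielding that $P$ lies on a rational normal curve in $\mathbb{P}^3$, which is exactly the rational normal cubic asserted in the proposition. The delicate part is thus entirely the combinatorial case analysis of degenerate positions; the final invocation of Castelnuovo is then immediate.
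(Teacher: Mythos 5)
Your route (pin down the Hilbert function of $P$, then invoke Castelnuovo's Theorem \ref{CN}) is genuinely different from the paper's, but it has a gap that cannot be closed with the tools you allow yourself. The paper's proof of Proposition \ref{PP2} starts by extracting information from the bundle itself: since $h^0(\mathcal{J}_P(1))=0$ one gets $h^0(E)=1$, and the Euler characteristic $\chi(E)=8$ together with Serre duality ($h^2(E)=h^0(E(1))$) forces $h^0(E(1))\ge 7$, hence $h^0(\mathcal{J}_P(2))\ge 3$. Everything then runs through the geometry of the net of quadrics through $P$: two of them, $Q_1,Q_2$, with no common component cut out a curve $Y$ of degree $4$; a third quadric $Q_3$ outside their span must contain a component $C$ of $Y$ by B\'ezout (else $|Q_3\cap Y|=8<11$); and a case analysis on $\deg C\in\{1,2,3\}$, using CB(3) to eliminate the line, conic and plane-cubic cases, leaves only the twisted cubic, which then must contain at least $9$ and finally all $11$ points, again by CB(3). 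Your proposal never derives this quadric count --- CB(3) alone does not yield it --- and this matters at exactly the step you yourself flag as delicate. Concretely: let $l_1,l_2$ be skew lines and take $P$ to be the six points of $l_1\cap S$ together with five of the six points of $l_2\cap S$. This $P$ spans $\mathbb{P}^3$ and satisfies CB(3) (a cubic through five of the points of $l_1$ contains $l_1$; one through four of the points of $l_2$ contains $l_2$), yet $h_P(3)=8$, $h_P(2)=6$, and $P$ lies on no twisted cubic, since a twisted cubic meets a line in at most two points. So both of your key claims fail under your standing hypotheses: the equality $h_P(3)=10$ is not forced (CB(3) gives only $h_P(3)\le 10$, and there is no genericity to invoke --- the proposition is asserted for every such $E$, whose subscheme $P$ is essentially unique because $h^0(E)=1$), and your plan to secure linear general position by ``deriving a contradiction with CB(3)'' from degenerate subconfigurations dead-ends on this example, which is perfectly consistent with CB(3) plus spanning. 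Excluding it requires the additional geometry coming from the quadrics through $P$ and the incidence with $S$ (this is the content of the paper's Case I; note, as a caution, that even there the $6+5$ split on skew lines needs special care, since the CB(3) failure the paper asserts does not hold verbatim for that split).

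A secondary point: even granting LGP, your Hilbert-function target sits at the wrong degree. The classical Castelnuovo lemma behind Theorem \ref{CN} (see \cite{ACGH}) takes as input the quadric-level equality $h_P(2)=2n+1=7$ for $d\ge 2n+3$ points in linearly general position; that is exactly what the paper's bundle computation $h^0(\mathcal{J}_P(2))\ge 3$ would hand you (given LGP, $h_P(2)\ge 7$ is standard), whereas the single equality $h_P(3)=10$ at $m=3$ is not the hypothesis the cited theorem's proof actually runs on. So to salvage your strategy, the indispensable first step is the one you omitted: use the sequence \ref{NEA} and Riemann--Roch/Serre duality to produce three independent quadrics through $P$, and rule out the non-LGP configurations using those quadrics --- not CB(3) and Theorem \ref{T1} alone --- before citing Theorem \ref{CN}.
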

 \begin{proof}
Since there is no hyperplane passing through $P$, from the cohomology sequence of \ref{NEA},  we have $h^0(E) =1$. On the other hand, the Euler characteristic computation says that 
$h^0(E(1)) \ge 7$. Again tensoring \ref{NEA} by $\mathcal{O}_S(1)$ and considering the cohomology sequence we have, $h^0(\mathcal{J}_P(2)) \ge 3$. In other words, $P$ lies in the intersection of at least $3$ quadrics.  If all the quadrics have a common component  $H$ where, $H$ is a hyperplane, then one can show that $P$ lies on $H$, a contradiction.
Choose two quadrics  $Q_1, Q_2$ with out having a common component and   passing through $P$.  Let $Q_3$ be another quadric which is not in the span of $Q_1, Q_2$. If $Q_3$ does not contain any component of $Y=: Q_1 \cap Q_2$, then  $Q_3$ intersects $Y$ in a subscheme of length $8$, a contradiction. Thus $Q_3$ contains a component of $Y$.\\
{\bf Case I}:   $Q_3$ contains a component, say, $C$ of degree $1$.\\
Then the remaining components of $Y$ intersect $Q_3$ in a subscheme of length $6$. Thus $C$ contains a subscheme of length at least $5$. Thus there always exists a plane $H$ containing a subscheme of length at least $6$ and the residual of $P$ with respect to $H$ satisfies $\text{CB}(2)$, hence the residual has length at least $4$ and at most $5$. Thus they all lie in a line, say $l$. Clearly $l$ can not intersect $C$, otherwise we will get a plane containing a subscheme of length $9$ hence all of $P$. Thus $C$ and $l$ are skew lines. If $C$ contains a subscheme of length exactly $5$  of $P$, then a subscheme of length $10$  of $P$ lie in a pair of skew lines and a subscheme of length $1$ out side the union of $C$ and $l$. In such a situation one can see that $P$ fails to satisfy $\text{CB}(3)$, a contradiction. \\
If $C$ contains a subscheme of length $6$, then $l$ can contains a subscheme of length  exactly $4$, again one can see that $P$ can not satisfy $\text{CB}(3)$.  \\
{\bf Case II}:  $Q_3$ contains a component $C$ of degree $2$.\\
In this case the remaining components of $Y$ intersect $Q_3$ in a subscheme of length $4$. Thus $C$ contains a subscheme of length at least $7$. 
Since any degree $2$ space curve  lie in a plane,  there is a plane containing a subscheme of length $7$ of $P$. In this case arguing as case I, one can get a contradiction.\\
Thus, the only possibility is $Q_3$ contains a component $C$ of degree $3$. Now any degree three space curve is either a plane curve or it is a rational normal cubic curve.
In case of plane curve we can easily get a contradiction. \\ 
Thus  $C$ is a rational normal cubic which contains a subscheme of length at least $9$ of $P$. If the remaining points of $P$ do not lie on $C$, then $P$ imposes independent conditions on sections of $\mathcal{O}(3)$, hence fails to satisfy $\text{CB}(3)$, a contradiction. Therefore, $P$ lies on $C$, which concludes the proposition.

 \end{proof}   
 
 \begin{proposition}\label{NP1}
 The subset $\mathcal{M}_2$ is of $13$ dimensional and it can have at most $2$ irreducible components, one of  dimension $13$ and  possibly one of dimension $12$. 
 \end{proposition}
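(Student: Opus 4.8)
The plan is to realize $\mathcal{M}_2$ via the Serre construction and to compute its dimension one stratum at a time. First I would record the cohomology of a bundle $E \in \mathcal{M}_2$. Writing $E$ as in \ref{NEA} with $P \subset H$ a length-$11$ subscheme of a plane $H$, I observe that $\mathrm{Pic}(S) \cong \mathbb{Z}$ forces $S$ to contain no line, so a line meets $S$ in at most $6$ points and $P$ cannot be collinear; hence $P$ spans $H$ and $h^0(\mathcal{J}_P(1)) = 1$. Since $h^1(\mathcal{O}_S) = 0$, the sequence \ref{NEA} then gives $h^0(E) = 2$, so that $\mathcal{M}_2 = \{E : h^0(E) = 2\}$ and every such $E$ carries a pencil $\mathbb{P}(H^0(E)) \cong \mathbb{P}^1$ of sections. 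Furthermore, for any section $s$ with $0$-dimensional zero scheme $Z(s)$ there is a sequence $0 \to \mathcal{O}_S \to E \to \mathcal{J}_{Z(s)}(1) \to 0$, whence $h^0(\mathcal{J}_{Z(s)}(1)) = h^0(E) - 1 = 1$: every section vanishes on a length-$11$ scheme lying on a unique plane. This is the mechanism that will cut the naive dimension count down by one.

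Next I would parametrize the configurations. Let $\mathcal{F}$ be the irreducible family of pairs $(H, P)$ with $P$ a reduced length-$11$ subscheme of the plane sextic $C_H := H \cap S$; it fibers over the $3$-dimensional space of planes with $11$-dimensional fibers, so $\dim \mathcal{F} = 14$. By Serre duality $\mathrm{Ext}^1(\mathcal{J}_P(1), \mathcal{O}_S) \cong H^1(\mathcal{J}_P(3))^*$, and the sequence $0 \to \mathcal{J}_P(3) \to \mathcal{O}_S(3) \to \mathcal{O}_P \to 0$, together with $h^0(\mathcal{O}_S(3)) = 20$ and $h^1(\mathcal{O}_S(3)) = 0$, gives $h^1(\mathcal{J}_P(3)) = 1 + h^0(\mathbb{P}^2, \mathcal{J}_P(3))$. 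On the open locus $U \subset \mathcal{M}_2$ where $P$ lies on no plane cubic this equals $1$, the extension is unique, and $\mathcal{F} \dashrightarrow U$ is dominant; by the observation above its generic fiber is the full pencil $\mathbb{P}^1$, so $\dim U = 14 - 1 = 13$ and $U$ is irreducible as the image of the irreducible $\mathcal{F}$. Combined with Theorem \ref{t2}, which supplies a component of dimension $\ge 13$, this shows $\overline{U}$ is an irreducible component of $\mathcal{M}_2$ of dimension exactly $13$.

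I would then bound the complement $\mathcal{M}_2 \setminus U$, where $P$ lies on a plane cubic. For $11 > 9$ points, B\'ezout forces such a cubic $D$ to be unique, so $h^0(\mathbb{P}^2, \mathcal{J}_P(3)) = 1$ and $h^1(\mathcal{J}_P(3)) = 2$. The pairs $(H, D)$ form an irreducible $12$-dimensional family and $P$ is a choice of $11$ among the $18$ points of $D \cap C_H$; with the bookkeeping $\dim\{P\} + (h^1(\mathcal{J}_P(3)) - 1) - (h^0(E) - 1) = 12 + 1 - 1 = 12$, this stratum has dimension $12$. Configurations on a plane conic (or deeper) give strictly smaller numbers --- $P$ on a conic yields $\dim\{P\} = 8$, $h^1(\mathcal{J}_P(3)) = 4$, hence dimension $10$ --- and, since $P$ on a conic automatically lies on a cubic (the conic together with a line), every such configuration is a degeneration of the plane-cubic one and lies in its closure. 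A monodromy argument --- the full symmetric group acting on the $18$ points of a generic cubic-sextic intersection --- shows the plane-cubic stratum is irreducible.

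Assembling these, $\mathcal{M}_2 = \overline{U} \cup \overline{Z}$, where $\overline{U}$ is the $13$-dimensional irreducible component found above and $\overline{Z}$ is the irreducible closure of the $12$-dimensional plane-cubic stratum, with every deeper stratum contained in $\overline{Z}$. Thus $\mathcal{M}_2$ has dimension $13$ and at most two irreducible components, one of dimension $13$ and possibly one of dimension $12$. I expect the difficulty to be two-fold. The first is the irreducibility of the plane-cubic stratum, which rests on the uniform-position/monodromy input for the varying intersection $D \cap C_H$. The second --- and this is precisely the source of the word ``possibly'' --- is to decide whether $\overline{Z} \subseteq \overline{U}$, equivalently whether a generic member of the pencil of sections of a plane-cubic bundle still vanishes on a configuration lying on no cubic: if it does, $\mathcal{M}_2$ is irreducible, and if not the $12$-dimensional stratum is a genuine second component. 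Keeping the fibers of the Serre-construction map under uniform control, so that no stratum exceeds its expected dimension, is the technical heart of the argument.
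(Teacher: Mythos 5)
Your stratification and the two main dimension counts essentially coincide with the paper's: the open stratum $U$ (no plane cubic through $P$, so $h^1(\mathcal{J}_P(3))=1$ and the extension is unique) gives the $13$-dimensional irreducible subset the paper calls $\mathcal{M}_2^1$; the plane-cubic stratum gives $3+9+1-1=12$, which is exactly the paper's count for $\mathcal{M}_2^2$; and the irreducibility of both strata rests on the same monodromy input. Up to this point the proposal is sound and follows the paper's route.

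The genuine gap is in how you dispose of the deeper strata. You assert that since $P$ on a conic ``automatically lies on a cubic (the conic together with a line),'' every such configuration is a degeneration of the plane-cubic one and the corresponding bundles lie in its closure. This argument operates at the level of configurations, but a point of $\mathcal{M}_2$ is a bundle, i.e.\ a configuration \emph{together with} an extension class, and the extension space jumps precisely on these strata: by your own computation $h^1(\mathcal{J}_P(3))=4$ for $P$ on a conic, so the bundles over such a $P$ form a $\mathbb{P}^3$ of extensions, whereas over the cubic stratum one has only a $\mathbb{P}^1$. Limits of bundles from the open cubic stratum sweep out, a priori, only part of that $\mathbb{P}^3$; nothing in your argument shows that every extension class over a conic-configuration arises as such a limit, so the covering claim $\mathcal{M}_2=\overline{U}\cup\overline{Z}$ is unproven and a third, low-dimensional component is not excluded. (You also miss an intermediate stratum: two distinct cubics through $11$ points must share a line or a conic, and since a line meets $S$ in only $6$ points the borderline case is $10$ of the $11$ points on a conic with one point off it --- not ``$P$ on a conic.'') The paper closes exactly this hole by a different mechanism: it bounds all jumping strata $\triangle_{H,i}=\{P: h^0(H,\mathcal{J}_{P,H}(3))=1+i\}$ at once via an incidence variety, obtaining bundle dimension $\le 3+(8-i)+(i+1)-1=11$, and then invokes \cite[Corollary 3.1]{SIM1}, which says every irreducible component of the space of bundles fitting in such an extension has dimension at least $12$; hence no stratum of dimension $\le 11$ can contain the generic point of a component, and the closures of the two main strata must cover $\mathcal{M}_2$. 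Your proof needs either this lower-bound input or an actual specialization argument for the extension classes; as written, the final assembly step fails.
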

 \begin{proof}
 By Theorem \ref{t2}, $\mathcal{M}_2$ has dimension at least $13$.  Note that $\mathcal{M}_2$ can be identified as a subspace  of the space of triples,
 \[
 \{(P, H, E) \in \text{Hilb}^{11}(S) \times \mathbb{P}(H^0(\mathcal{O}(1))) \times  \cup_{Z \in \text{Hilb}^{11}(S)}\mathbb{P}(H^1(\mathcal{J}_Z(3))): P \subset H, E \in \mathbb{P}(H^1(\mathcal{J}_P(3)))\}
 \]
 If $H$ is in general position with respect to $S$ and $P$ is a general subscheme of length $11$, then one can show (as in \cite{SIM2}), that such collection of triples constitute 
 a $13$ dimensional irreducible subset $\mathcal{M}_2^1$. 
 
 Note that, if the minimal degree curve containing $P$ has degree $\ge 4$, then one can show that $P$ fails to satisfy $\text{CB}(3)$ ( as a general cubic hypersurface intersects a degree $4$ curve in a subscheme of length $12$ where as the length of $P$ is $11$).\\
 If $P$ is contained in a plane curve of dgree $2$, then one can show that the dimension of such  subschemes is strictly smaller than $12$ but by [\cite{SIM1}, Corollary 3.1],  every component of the space of bundles  $E$  which fits in the exact sequence of the form
  \[
   0 \to \mathcal{O}_S \to E \to \mathcal{J}_P(1) \to 0
   \]
   has dimension at least $12$.\\
 Consider the subset 
$ \mathcal{M}_2^2:= \{(P, H, E) \in \mathcal{M}_2\}$ such that $P$ is contained in an irreducible cubic curve in $H$. Then since an irreducible plane cubic curve intersects another cubic 
curve at $9$ points, $h^0(\mathcal{J}_P(3)) = 11$ and such $P$ satisfies $\text{CB}(3)$. Thus the space of extension $\text{Ext}^1(\mathcal{J}_P(1), \mathcal{O}_S)$ has dimension $2$. Also 
note that, dimension of such sub-schemes in a plane irreducible cubic curve $C$ is zero ( there are only finitely many choices in  $ S \cap C$). Using monodromy argument one can 
show that $\mathcal{M}_2^2$ is an irreducible subset of $\mathcal{M}_2$

On the other hand,
we have $h^0(\mathcal{J}_P(1))= 1$, since P is contained in a plane, so $h^0(E)= 2$. This
means that for a given bundle $E$, the space of choices of sections (modulo scaling)
leading to the subscheme $P$, has dimension $1$. Hence the dimension of the space of
bundles obtained by this construction is one less than the dimension of the space of
subschemes.

Count now the dimension of the space of choices of $P$: there is a three-dimensional
space of choices of the plane $H$, and for each one we have an $9$-dimensional space
of choices of irreducible cubic curves in the plane $H$ and for each choice of $P$, one dimensional space of extensions upto scalars.  This gives $\text{dim}\{P\}= 3 + 9 + 1 -1=12$.

 Let us consider the other subsets, that is the  sub-schemes $P$ such that  the dimension of the space of extensions
 $H^1(\mathcal{J}_P(3))$ is high.
 So let us consider the set $\triangle_{H, i}:= \{ P \in Hilb^{11}(S \cap H) : h^0(H, \mathcal{J}_{P, H}(3)) = 1 +i\}$. Note that, for $P \in \triangle_{H, i}, h^1(\mathcal{J}_P(3)) = i +2$.
 Consider the incidence variety $T = \{(Z, C): Z \subset C\} \subset \mathbb{P}(H^0(\mathcal{O}_H(3))) \times \text{Hilb}^{11}(S \cap H)$
and let $\pi_1, \pi_2$ be the projections. Since $Z$ is contained in at least two cubic plane curve, $C$ has to be reducible. In other words, the image of $\pi_2$ has dimension 
at most $8$.\\
Note that, 
the dimension
of
$\pi_2^{-1}(C)$ is  $0$.\\

Thus, we have, $8 \ge \text{dim }T \ge \text{dim }\pi_1^{-1}(\triangle_{H, i}) \ge \text{dim }(\triangle_{H, i})  + i $.
This implies that $\text{dim }\triangle_{H, i}$ is bounded by $8-i$. Hence  as earlier the space of bundles $E$ obtained by this construction has dimension 
$\le \text{dim}(\cup_H(\triangle_{H, i}) +$ the dimension of the extensions $-1 = 3 + 8 -i + i+1 -1= 11$. But by [\cite{SIM1}, Corollary 3.1],  every component of the space of bundles  $E$  which fits in the exact sequence of the form
  \[
   0 \to \mathcal{O}_S \to E \to \mathcal{J}_P(1) \to 0
   \]
   has dimension at least $12$ and closure of $\mathcal{M}_2^1 \cup \mathcal{M}_2^2$ covers $\mathcal{M}_2$. Hence $\mathcal{M}_2$ can have at most two irreducible components
   of dimension $13$ and $12$ respectively.

  \end{proof}
 
 Thus we have the following theorem.
\begin{theorem}
The moduli space $\mathcal{M}(H, 11)$ is covered by $\mathcal{M}_1$ and $\mathcal{M}_2$. In particular, $h^0(E) \ne 0$ for all $E \in \mathcal{M}(H, 11)$ and all the irreducible component of $\mathcal{M}(H, 11)$ has dimension at most $13$. Further more $\mathcal{M}_2$ contains at most $2$ irreducible components one of dimension $13$ and possibly
another of dimension $12$. 
\end{theorem}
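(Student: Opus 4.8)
The plan is to obtain the theorem as a formal consequence of Theorem~\ref{TZ}, Proposition~\ref{PP2}, Theorem~\ref{t1} and Proposition~\ref{NP1}; the genuine work having been done in those statements, what remains is to organize the covering and to push the dimension bounds down to the individual irreducible components.

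First I would fix an arbitrary $E \in \mathcal{M}(H,11)$ and record that it carries a nonzero section. Since $c_2 = 11 \le 14$, the argument in the proof of Theorem~\ref{TZ} yields $h^0(S,E) \ne 0$ for \emph{every} such $E$, not merely for the general one: indeed $\chi(E) = 19 - c_2 = 8 > 0$, so $h^0(E) + h^0(E(1)) > 0$, and were $h^0(E)$ to vanish the resulting length-$23$ subscheme satisfying $\mathrm{CB}(5)$ would have to realize one of the configurations (I)--(V) of Theorem~\ref{TY}, each of which forces length $\ge 27$, a contradiction. I would then observe that, since $\text{Pic}(S) = \mathbb{Z}H$ and $dH\cdot H = 6d > 3 = \tfrac12 H\cdot H$ for $d \ge 1$, stability of $E$ prevents any section from vanishing along an effective divisor; hence every nonzero section vanishes in codimension two and produces the sequence~\ref{NEA} with $P$ a length-$11$ locally complete intersection subscheme satisfying $\mathrm{CB}(3)$.

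Next I would run the dichotomy on such a $P$. If $P$ lies on a hyperplane, then $E \in \mathcal{M}_2$ by definition; otherwise Proposition~\ref{PP2} places $P$ on a rational normal cubic, so $E \in \mathcal{M}_1$. As $E$ was arbitrary this gives $\mathcal{M}(H,11) = \mathcal{M}_1 \cup \mathcal{M}_2$, and in particular $h^0(E) \ne 0$ for all $E$. Because $\mathcal{M}_1$ and $\mathcal{M}_2$ are closed, any irreducible component $Z$ of $\mathcal{M}(H,11)$ satisfies $Z = (Z \cap \mathcal{M}_1) \cup (Z \cap \mathcal{M}_2)$ and hence lies entirely in $\mathcal{M}_1$ or in $\mathcal{M}_2$; combined with $\dim \mathcal{M}_1 = 12$ (Theorem~\ref{t1}) and $\dim \mathcal{M}_2 = 13$ (Proposition~\ref{NP1}) this bounds every component by dimension $13$. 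The last assertion, that $\mathcal{M}_2$ has at most two irreducible components, of dimensions $13$ and possibly $12$, is exactly the content of Proposition~\ref{NP1}.

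The substantive difficulties all lie upstream, so within this assembly the two points I would be most careful about are the exhaustiveness of the dichotomy and the non-uniqueness of sections. For the former, one must be sure that no length-$11$ scheme escapes both alternatives, which is precisely the guarantee of Proposition~\ref{PP2} as soon as no hyperplane passes through $P$. For the latter, a bundle in $\mathcal{M}_2$ has $h^0(E) = 2$ and may admit several sections with distinct zero-schemes; but classifying $E$ by \emph{any} one of them already places it in $\mathcal{M}_1 \cup \mathcal{M}_2$, so no compatibility among the sections is needed and the covering goes through cleanly.
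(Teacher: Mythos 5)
Your proposal is correct and matches the paper's own route, which likewise deduces the theorem directly from Theorem~\ref{TZ}, Proposition~\ref{PP2}, Theorem~\ref{t1} and Proposition~\ref{NP1}. In fact you make explicit two steps the paper leaves implicit --- that for $c_2=11\le 14$ the section exists for \emph{every} $E$ (since configurations (I)--(V) of Theorem~\ref{TY} force length $\ge 27$), and that stability with $\mathrm{Pic}(S)=\mathbb{Z}H$ forces sections to vanish in codimension two --- so nothing further is needed.
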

\begin{remark}
If $\overline{\mathcal{M}_2^1}$ contains $\mathcal{M}_2^2$, then $\mathcal{M}_2$ is irreducible and hence  the the conjecture \ref{NC1} is true.
\end{remark}

 \section{Dimension estimates of $\mathcal{M}(H, c_2)$ and its boundary for $c_2 \le 19$}\label{S5}
 It is known \cite[Corollary 17.10]{SIM3} that the moduli space $\mathcal{M}(H, c_2)$ is empty for $c_2 \le 4$.
   \begin{proposition}
 For $c_2= 5, 6,   \mathcal{M}(H, c_2)$ is irreducible of dimension $2 $ and $3$ respectively and $\mathcal{M}(H, 7)$ is empty.
 \end{proposition}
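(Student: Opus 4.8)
The plan is to reduce every bundle to a Serre extension and then classify the resulting zero-scheme using Theorem \ref{T1}. First I would record that for $c_2\le 14$ every $E\in\mathcal M(H,c_2)$ carries a non-zero section. Indeed $\chi(E)=19-c_2>0$, and by Serre duality $h^2(S,E)=h^0(S,E(1))$, so $h^0(E)+h^0(E(1))>0$. If $h^0(E)=0$, twisting a section of $E(1)$ down by one gives $0\to\mathcal O_S(-1)\to E\to \mathcal J_P(2)\to 0$ with $|P|=c_2+12$; since $H^0(\mathcal O_S(-1))=H^1(\mathcal O_S(-1))=0$ one gets $h^0(E)\cong h^0(\mathcal J_P(2))$, forcing $P$ to lie on no quadric and contradicting Theorem \ref{TY} (that case requires $|P|\ge 27$, whereas $c_2+12\le 26$). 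Hence for $c_2\in\{5,6,7\}$ every $E$ fits in $0\to\mathcal O_S\to E\to\mathcal J_P(1)\to 0$ with $|P|=c_2$ and $P$ an lci subscheme satisfying $\mathrm{CB}(3)$; conversely any such $P$ produces a locally free $E$, and a destabilizing sub-line-bundle $\mathcal O_S(a)$ with $a\ge 1$ would yield a non-zero map to $\mathcal J_P(1-a)$, which is impossible, so $E$ is automatically stable. Thus $\mathcal M(H,c_2)$ is governed entirely by the length-$c_2$ lci $\mathrm{CB}(3)$ subschemes of $S$.

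Next I would pin down their geometry. Applying the contrapositive of Theorem \ref{T1} with $d=3$: a $\mathrm{CB}(3)$ scheme with $|P|\ge 2$ cannot impose independent conditions with separation, so some $3k+2$ of its points lie on a $k$-plane; for $|P|\le 7$ the only option is $k=1$, i.e.\ at least $5$ points of $P$ are collinear. Here the hypothesis $\mathrm{Pic}(S)=\mathbb Z H$ enters decisively: $S$ contains no line, so any line $L$ meets $S$ in a length-$6$ scheme and at most $6$ points of $P$ can be collinear. For $c_2=7$ I would then derive a contradiction. After the above, $P$ has exactly $5$ or exactly $6$ points on a line $L$, with the remaining point(s) off $L$; for an off-line point $q$ I would exhibit a cubic surface containing $P\setminus\{q\}$ but missing $q$, namely the union of a quadric containing $L$ and avoiding $q$ with a plane through the other off-line point(s) and avoiding $q$. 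This violates $\mathrm{CB}(3)$, so no length-$7$ lci $\mathrm{CB}(3)$ subscheme exists and $\mathcal M(H,7)=\emptyset$.

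For $c_2=5$ the classification forces $P$ to be $5$ of the $6$ points of $L\cap S$, and for $c_2=6$ it forces $P=L\cap S$ (the case ``$5$ on $L$ plus one off $L$'' fails $\mathrm{CB}(3)$ by the same cubic construction). In both cases I would run the Serre correspondence: the space of pairs $(E,\text{section up to scalar})$ is identified with the total space of $\mathbb P(\mathrm{Ext}^1(\mathcal J_P(1),\mathcal O_S))$ over the admissible $P$, and $\mathcal M(H,c_2)$ is its image after quotienting each $E$ by its $\mathbb P(H^0(E))$ of sections. Using $\mathrm{Ext}^1(\mathcal J_P(1),\mathcal O_S)\cong H^1(S,\mathcal J_P(3))^{\vee}$ together with the fact that $\ell$ collinear points impose $\min(\ell,4)$ conditions on cubics, I get $h^1(\mathcal J_P(3))=1$ for $\ell=5$ and $=2$ for $\ell=6$; moreover $h^0(E)=1+h^0(\mathcal J_P(1))=3$ in both cases, since planes through $P$ are exactly the planes through $L$. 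The counts then read $\dim\mathcal M=\dim\mathbb G(1,3)+\dim\mathbb P(\mathrm{Ext}^1)-\dim\mathbb P(H^0(E))=4+0-2=2$ for $c_2=5$ and $4+1-2=3$ for $c_2=6$. Irreducibility follows because the parameter spaces are irreducible: for $c_2=5$ the $6$-sheeted cover $L\mapsto L\cap S$ and the $\binom{6}{5}$ choices are permuted transitively by the full symmetric monodromy of a very general sextic, and for $c_2=6$ the parameter space is a $\mathbb P^1$-bundle over $\mathbb G(1,3)$.

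The main obstacle is the honest bookkeeping in this last step: one must subtract the \emph{full} $\mathbb P(H^0(E))$ of sections to avoid overcounting and verify that every section of such an $E$ has a zero-scheme of the same collinear type, so that the fibres are genuinely $2$-dimensional and $\mathcal M(H,c_2)$ is exactly the image rather than a proper subvariety. A secondary technical point is that the $\mathrm{CB}(3)$-classification via Theorem \ref{T1}, which is phrased for reduced point sets, must also cover non-reduced $P$ in order to make the emptiness of $\mathcal M(H,7)$ an assertion about \emph{all} bundles; I would handle this by a semicontinuity argument specializing to the reduced case, or by the scheme-theoretic form of Theorem \ref{T1}.
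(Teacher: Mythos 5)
Your proposal is correct in substance and arrives at the right answers, but its second half takes a genuinely different route from the paper. The first half coincides: both reduce every $E$ to an extension $0 \to \mathcal{O}_S \to E \to \mathcal{J}_P(1) \to 0$ with $|P| = c_2$ satisfying $\mathrm{CB}(3)$. From there the paper does not re-derive collinearity: it quotes \cite[Proposition 17.8]{SIM3} to place $P$ on a line, so $\mathcal{M}(H,7) = \emptyset$ is immediate because a line meets $S$ in a length-$6$ scheme; you instead extract ``at least $5$ points on a line'' from Theorem \ref{T1} and then eliminate the mixed configurations ($5$ or $6$ points on $L$ plus off-line points) with explicit reducible cubics (quadric through $L$ union a suitable plane) --- a correct, self-contained alternative. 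The larger divergence is the structure result. The paper does not count parameters at all: for $c_2 = 5$ it shows $h^0(E)=3$, takes the subsheaf $R \subset E$ generated by global sections with length-$1$ cokernel $T$ supported at a point $p \in S$, obtains $0 \to \mathcal{O}_S(-1) \to \mathcal{O}_S^3 \to R \to 0$, matches this with the tautological sequence $0 \to \mathcal{O}(-1) \to G^*\otimes\mathcal{O} \to \mathcal{R}_p \to 0$ for $G = H^0(\mathbb{P}^3,\mathcal{J}_p(1))$, and concludes $E \cong ({\mathcal{R}_p}{\mid_S})^{**}$, yielding an isomorphism $\mathcal{M}(H,5) \cong S$; for $c_2 = 6$, $E$ is itself globally generated and $E \cong {\mathcal{R}_p}{\mid_S}$ with $p \in \mathbb{P}^3 \setminus S$ the base point of the net of sections, yielding $\mathcal{M}(H,6) \cong \mathbb{P}^3 \setminus S$. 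These isomorphisms give irreducibility and the dimensions $2$ and $3$ simultaneously, and they dissolve what you rightly flagged as your main obstacle: there is no fibration bookkeeping over $\mathbb{P}(H^0(E))$ because each $E$ is reconstructed from the single point $p$, not from a chosen section. Your incidence count $\dim\mathbb{G}(1,3) + \dim\mathbb{P}(\mathrm{Ext}^1) - \dim\mathbb{P}(H^0(E))$ gives the same numbers, and your monodromy argument is exactly the style the paper itself uses later for $c_2 = 8, 9, 10$; what your route buys is independence from the auxiliary sheaves $\mathcal{R}_p$, at the cost of the fibre and closure verifications, and what the paper's route buys is an explicit identification of the moduli spaces, which is strictly more information.

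One point in your plan needs repair rather than mere care. Theorem \ref{T1} is stated for reduced point sets, and the emptiness of $\mathcal{M}(H,7)$ is a statement about \emph{every} bundle, so the non-reduced case cannot be waved away. Your first proposed fix --- semicontinuity specializing to the reduced case --- does not work as stated: a non-reduced $\mathrm{CB}(3)$ subscheme need not be a limit of reduced $\mathrm{CB}(3)$ subschemes, since the Cayley--Bacharach locus in the Hilbert scheme can have components consisting entirely of non-reduced schemes (this is precisely why the paper's Theorem \ref{TZ} is only asserted for a general $E$). The paper sidesteps the issue at this step because \cite[Proposition 17.8]{SIM3} is invoked without any reducedness hypothesis. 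So to make your argument complete you should either quote that result, or carry out your second suggestion and prove a scheme-theoretic form of Theorem \ref{T1} for length $\le 7$; the semicontinuity shortcut should be dropped.
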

 \begin{proof}
    Let $E \in \mathcal{M}(H, c_2), c_2= 5, 6, 7$.  Then by theorem \ref{TZ},we have the following exact sequence
  \[
  0 \to \mathcal{O}_S \to E \to \mathcal{J}_P(1) \to 0
  \]   
  where, $P$ is a zero-dimensional locally complete intersection subscheme of $S$ of length $c_2$  and satisfies $\text{CB}(3)$. Thus by \cite[Proposition 17.8]{SIM3}, $P$ lies in a line $l$.  Since a line intersects
  $S$ in a subscheme of length $6$, $\mathcal{M}(H, 7)$ is empty.

   Note that, a linear form which vanishes on $P$, also vanishes on $l$. Thus
  $h^0(S, \mathcal{J}_P(1)) = h^0(S, \mathcal{J}_l(1)) = 2$. Therefore, $h^0(E) = 3$.\\ 
  {\bf Case I}: $c_2 = 5$.\\
   Let $R$ be the subsheaf of $E$ generated by its global sections and $T$ be the co-kernel in the exact sequence
   \[
   0 \to R \to E \to T \to 0.
   \]
   We also have an exact sequence
   \[
   0 \to \mathcal{J}_{l \cap S}(1) \to \mathcal{J}_P(1) \to T \to 0.
   \]
   So $T$ has length $1$. It is supported on a point $p \in S$.
    Since  $h^0(E) = 3, R$ is generated by $3$ sections. Thus we have the following exact sequence 
   \[
   0 \to \text{ker} \to \mathcal{O}_S^3 \to R \to 0
   \]
   with locally free kernel. From the degree computation, we have $\text{ker} = \mathcal{O}_S(-1)$.
   
   On the other hand, if $p \in \mathbb{P}^3$ is a point and $G:= H^0(\mathbb{P}^3, \mathcal{J}_p(1))$, then we have the canonical exact sequence
   \[
   0 \to \mathcal{O}(-1) \to G^* \otimes \mathcal{O} \to \mathcal{R}_p \to 0
   \] 
   where, $\mathcal{R}_p$ is a reflexive sheaf with $c_2(\mathcal{R}_p) =\text{ the class of the line }$. Thus ${\mathcal{R}_p}{\mid_S}$ has $c_2 = 6$.
   
   Thus we can conclude that $R = {\mathcal{R}_p}{\mid_S}$ and $E \cong {{\mathcal{R}_p}{\mid_S}}^{**}$. 
   Thus we get a map $\mathcal{M}(H, 5) \to S$ which takes $E \to p$, the support of $T$,  with the inverse map, $p \to {{\mathcal{R}_p}{\mid_S}}^{**}$, which gives an isomorphism 
   of $\mathcal{M}(H, 5)$ to $S$.
   
   {\bf Case II}: $c_2= 6$.\\
   In this case $P = l \cap S$. In other words, $E$ itself is  generated by its sections. 
   Thus we have an exact sequence 
   \[
   0 \to \mathcal{O}(-1) \to \mathcal{O}_S^3 \to E \to 0,
   \]
   which gives a subspace of dimension $3$ of the space of linear forms in $\mathbb{P}^3$. Let $p$ be the base locus of this $3-$dimensional subspace.  Then it is easy to see that   
   $E$ is  isomorphic to $\mathcal{R}_p$  which defines a map  
   $f: \mathcal{M}(H, 6) \to \mathbb{P}^3$. 
   On the other hand,  the restriction of the co-kernel sheaf $\mathcal{R}_p$ to $S$ is locally free if and only if $p \in \mathbb{P}^3 \setminus S$. Thus image of $f$ is contained in 
   $\mathbb{P}^3 \setminus S$. In other words, we have a map $f: \mathcal{M}(H, 6) \to \mathbb{P}^3 \setminus S$ with  inverse $p \to {\mathcal{R}_p}{\mid_S}$, which proves the 
   Proposition.
   \end{proof}
 \subsection{When $8 \le c_2 \le 10$}

 Let $E \in \mathcal{M}(H, c_2), 8 \le c_2 \le 10$ .   Then $E$ fits in the following exact sequence:
 \begin{equation}\label{EE1}
 0 \to \mathcal{O}_S \to E \to \mathcal{J}_P(1) \to 0
 \end{equation}
 where, $P$ is a zero-dimensional subscheme of $S$ of length $c_2$ and  $P$ satisfies $\text{CB}(3)$. Thus by  \cite[Proposition 17.8]{SIM3}, $P$ lies on 
 a hyperplane or on a pair of skew lines or on a double line.
 \begin{proposition} 
 $\mathcal{M}(H, 8)$ is irreducible of dimension $7$. 
 \end{proposition}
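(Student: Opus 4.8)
The plan is to run the Serre construction in reverse and to pin down the unique configuration of $P$ that actually produces the bundles. As recorded above, every $E\in\mathcal{M}(H,8)$ sits in $0\to\mathcal{O}_S\to E\to\mathcal{J}_P(1)\to 0$ with $P$ a reduced length-$8$ locally complete intersection satisfying $\mathrm{CB}(3)$, and by \cite[Proposition 17.8]{SIM3} such a $P$ lies on a plane, on a pair of skew lines, or on a double line. I would first eliminate the skew-line case: if $P\subset l_1\cup l_2$ with $l_1,l_2$ skew, then for $p\in P\cap l_i$ the property $\mathrm{CB}(3)$ forces $P\setminus\{p\}$ to meet $l_i$ in at least four points (otherwise a cubic containing $l_j$ and meeting $l_i$ in only those points avoids $p$), i.e.\ $P$ meets each $l_i$ in at least five points; since $5+5=10>8$ this is impossible. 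The double-line case is a flat specialization of the conic case below (a conic degenerating to a double line), so it lies in the closure of that planar family and adds no new component.

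It remains to treat $P$ contained in a plane $H$, and the claim is that such a $P$ producing a necessarily nontrivial extension must lie on a conic $Q\subset H$. Indeed the extension forces $\mathrm{Ext}^1(\mathcal{J}_P(1),\mathcal{O}_S)\cong H^1(S,\mathcal{J}_P(3))^*\neq 0$ by Serre duality, so $P$ fails to impose independent conditions on cubics; combined with $\mathrm{CB}(3)$, a short Cayley--Bacharach analysis (ruling out five points on a line, which breaks $\mathrm{CB}(3)$ exactly as above, and ruling out general position, which gives $h^1=0$) leaves only $P$ lying on a conic. Conversely any eight points of a conic $Q$ satisfy $\mathrm{CB}(3)$, because deleting one leaves $7>6$ points on $Q$ and a cubic meeting $Q$ in seven points contains $Q$, hence passes through the deleted point. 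For such $P$ I would compute, from $0\to\mathcal{J}_P(3)\to\mathcal{O}_S(3)\to\mathcal{O}_P\to 0$ together with $h^0(\mathcal{J}_{Q}(3))=13$ (cubics containing $Q$), that $h^1(\mathcal{J}_P(3))=1$; thus the nonzero extension is unique and, by $\mathrm{CB}(3)$, locally free. Here $h^0(\mathcal{J}_P(1))=1$ gives $h^0(E)=2$, and stability is automatic since $h^0(E(-1))=0$.

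The dimension then comes from the incidence variety $\mathcal{I}=\{(P,E)\}$ with its two projections. Planar conic configurations depend on $3$ parameters for $H$ and $5$ for $Q\subset H$, the eight points being a finite choice among the twelve points of $Q\cap(S\cap H)$, so $\dim\{P\}=8$; since $\mathrm{Ext}^1$ is one-dimensional the projection to $\{P\}$ has finite fibers, whence $\dim\mathcal{I}=8$, while the projection $\mathcal{I}\to\mathcal{M}(H,8)$ has fibers $\mathbb{P}(H^0(E))=\mathbb{P}^1$. Therefore $\dim\mathcal{M}(H,8)=8-1=7$; the identical bookkeeping reproduces $\dim\mathcal{M}(H,5)=2$ and $\dim\mathcal{M}(H,6)=3$ from the line configurations, which is a reassuring check. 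For irreducibility I would show that the base $\{(H,Q)\}$ is irreducible and that the degree-$12$ cover $Q\cap(S\cap H)$ has full symmetric monodromy, so the space of pairs (configuration, choice of eight of the twelve points) is irreducible; then $\mathcal{I}$ and its image are irreducible, and by the first two paragraphs this image is all of $\mathcal{M}(H,8)$.

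The hard part will be twofold: the monodromy statement underpinning irreducibility (equivalently, the irreducibility of the parameter space of admissible $P$), which I expect to follow from the standard fact that the intersection of a varying conic with the fixed linear system of plane sextics cut on $S$ has full symmetric monodromy; and the configuration lemma that a coplanar $\mathrm{CB}(3)$ octuple failing to impose independent conditions on cubics must lie on a conic, which is the local analogue for $\mathrm{CB}(3)$ of the analysis carried out in Section \ref{S2}. The cohomological inputs ($h^1(\mathcal{J}_P(3))=1$, $h^0(E)=2$) and the absorption of the double-line locus into the closure are routine.
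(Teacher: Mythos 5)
Your proposal follows the same skeleton as the paper's own proof: reduce by the Serre construction to a coplanar $P$, show that $P$ lies on a conic, observe that any $8$ of the $12$ points of $C\cap S$ for a plane conic $C$ satisfy $\mathrm{CB}(3)$ and determine $E$ uniquely (via $h^1(\mathcal{J}_P(3))=1$ and $h^0(E)=2$, so that bundles cost one less than subschemes), and conclude irreducibility by monodromy on the choice of $8$ of the $12$ points as the conic varies, with dimension $3+5-1=7$. Your explicit elimination of the skew-line case (each line would need at least $5$ points of $P$, and $5+5>8$) is a sound addition which the paper passes over in silence, and your cohomological bookkeeping ($h^0(\mathcal{J}_{C}(3))=13$, hence $h^1(\mathcal{J}_P(3))=1$; uniqueness of the plane; stability from $h^0(E(-1))=0$) matches the paper's.

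The genuine gap is precisely the step you flag as hard: that a coplanar $\mathrm{CB}(3)$ octuple must lie on a conic. Your sketch presupposes the trichotomy that $8$ planar points failing to impose independent conditions on cubics either have $5$ collinear or lie on a conic. That classification is true classically, but note it does \emph{not} follow from the Eisenbud--Koh result quoted in the paper (Theorem \ref{T1}): with $d=3$ and $k=2$ the hypothesis ``no $3k+2$ points in a $k$-plane'' is violated by any $8$ coplanar points, so that theorem is silent here, and you would have to import or reprove the planar statement. The paper avoids this entirely by a short cohomological count: from $\chi(E)=11$, $h^1(\mathcal{J}_P(1))=5$, $h^2(\mathcal{O}_S)=10$ and $h^2(\mathcal{J}_P(1))=4$, the long exact sequence gives $h^2(E)=h^0(E(1))\ge 9$, hence $h^0(\mathcal{J}_P(2))\ge 5$; since the quadrics in $\mathbb{P}^3$ containing the plane $H$ form only a $4$-dimensional space, some quadric $Q\supset P$ does not contain $H$, and $Q\cap H$ is the desired conic. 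This argument is complete, scheme-theoretic (no reducedness of $P$ needed), and is what you should substitute for your configuration lemma. Secondly, your dismissal of the double-line case as ``a flat specialization of the conic case'' does not by itself show that every bundle arising from a double-line configuration lies in the closure of the conic family; the quick fix is that a \emph{reduced} $P$ supported in a double line lies on the underlying line, which is impossible since a line meets $S$ in only $6<8$ points (alternatively, run a dimension count against the lower bound of \cite[Corollary 3.1]{SIM1}, as the paper does for $c_2=9,10$ in Lemma \ref{P1}) --- but then you must either justify reducedness on every component via Theorem \ref{TZ}, or argue scheme-theoretically as the paper's quadric count does.
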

 \begin{proof}
  
 {\bf Claim}:  $P$ in \ref{EE1} lies in a plane curve of degree $2$. \\
 {\bf Proof of the claim}: \\
  By \cite[Proposition 17.8]{SIM3} , $P$ in \ref{EE1}   lies on a hyperplane $H$.  Thus $h^0(\mathcal{J}_P(1)) =1$. Note that, $h^2(\mathcal{J}_P(1)) = h^2(\mathcal{O}_S(1)) = 4$. Thus  from the long exact sequence of the canonical sequence
  \[
  0 \to \mathcal{J}_P(1) \to \mathcal{O}_S(1) \to \mathcal{O}_P(1) \to 0,
  \]
  we have $h^1(\mathcal{J}_P(1)) = 5$.
  On the other hand, from the long exact sequence of \ref{EE1}, we have the following exact sequence:
  \[
  0 \to H^1(E) \to H^1(\mathcal{J}_P(1)) \to H^2(\mathcal{O}_S) \to H^2(E) \to H^2(\mathcal{J}_P(1)) \to 0.
  \]
  Thus we have $h^0(E(1)) = h^2(E) \ge 9$. Tensoring \ref{EE1} by $\mathcal{O}_S(1)$ and considering its long exact sequence, one can see that $h^0(\mathcal{J}_P(2)) \ge 5$.
  Therefore, $P$ lies in at least $5$ quadratic hypersurface sections. But the quadratic hypersurface sections containing $H$ has dimension $4$. Thus there is a quadratic hypersurface 
  $Q$ containing $P$ but not containing $H$. Therefore, $Q \cap H$ gives a plane curve of degree $2$ containing $P$, which proves our claim.
  
  Since any two plane curves of degree $2$ intersect at a subscheme of length atmost $4$  of $S$, there exist a unique plane curve of degree $2$ containing $P$. On the other hand, given a plane curve of degree $2$, it intersects $S$ in a subscheme of length $12$  and any subscheme of length $8$  of this subscheme of length $12$  satisfies $\text{CB}(3)$, hence gives a vector bundle as an
  extension of the form:
  \[
  0 \to \mathcal{O} \to E \to \mathcal{J}_P(1) \to 0.
  \]
  Note that, such an extension is unique up to isomorphism.  Therefore,  $\mathcal{M}(H, 8)$ is isomorphic to the space of bundles $E$ fitting into an exact sequence of the form
\ref{EE1} , where $P$ is a length $8$ subscheme of $C \cap S$ for $C$ a rational plane curve of degree $2$ in $\mathbb{P}^3$. Now using the monodromy  argument on the 
  set of choices of subschemes of length $8$  out of subscheme of length $12$  of $C \cap S$, as $C$ moves one can show that $\mathcal{M}(H, 8)$ is irreducible.  Now the dimension of plane curves in 
  $\mathbb{P}^3$ is $8$ and  since $h^0(E) = 2$, the dimension of $\mathcal{M}(H, 8)$ is $8-1=7$, which concludes the Proposition.  
  \end{proof}

 \begin{lemma}\label{P1}
 Let $ 9 \le c_2 \le 10$. Then there is a bijection between the irreducible components of $\mathcal{M}(H, c_2)$ and the irreducible components of the space of bundles $E$ fitting into the exact sequence of the form \ref{EE1}, where $P$ is a length $c_2$ subscheme lying in a hyperplane.
   \end{lemma}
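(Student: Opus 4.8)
\textbf{Proof proposal for Lemma \ref{P1}.}

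The plan is to establish the claimed bijection by constructing maps in both directions between the two families and checking that they respect the decomposition into irreducible components. First I would fix $E \in \mathcal{M}(H, c_2)$ with $9 \le c_2 \le 10$ and invoke Theorem \ref{TZ} together with \cite[Proposition 17.8]{SIM3} to put $E$ into an exact sequence of the form \ref{EE1}, where the length-$c_2$ subscheme $P$ satisfies $\mathrm{CB}(3)$ and hence lies on a hyperplane, on a pair of skew lines, or on a double line. The key point to extract is that in all of these cases $P$ is in fact contained in a hyperplane: a pair of skew lines spans $\mathbb{P}^3$, but the residual/conditions-counting argument (as in the $c_2=8$ proof above, comparing $h^0(\mathcal{J}_P(1))$, $h^1(\mathcal{J}_P(1))$ and $h^0(\mathcal{J}_P(2))$ via the canonical sequence and the cohomology of \ref{EE1} tensored by $\mathcal{O}_S(1)$) forces the existence of an ambient plane containing $P$. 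So I would first reduce to the hyperplane case and thereby identify the relevant parameter space as triples $(P, H, E)$ with $P \subset H \cap S$.

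Next I would set up the correspondence at the level of families rather than points. Given a hyperplane $H$ general with respect to $S$, the intersection $H \cap S$ is a smooth plane sextic, and a general length-$c_2$ subscheme $P \subset H \cap S$ satisfying $\mathrm{CB}(3)$ gives, via $\mathrm{Ext}^1(\mathcal{J}_P(1), \mathcal{O}_S)$, a stable bundle $E$; conversely the construction above recovers $(P,H)$ from $E$ once one knows $h^0(\mathcal{J}_P(1)) = 1$, which pins down the plane $H$ uniquely. I would assemble this into a morphism from the space of bundles fitting into \ref{EE1} (with $P$ in a hyperplane) to $\mathcal{M}(H, c_2)$, and argue it is surjective using the first paragraph and dominant with finite (indeed generically injective) fibres by the uniqueness of the extension and of the recovered plane. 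Since a morphism that is surjective with irreducible generic fibres induces a bijection on irreducible components, this yields the statement.

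The main obstacle I anticipate is controlling the behaviour over the \emph{non-generic} locus, where $P$ may lie on more than one hyperplane or where $h^0(\mathcal{J}_P(1)) > 1$, so that the plane $H$ is not uniquely determined and the fibres of the correspondence could jump in dimension. Handling this requires showing that such degenerate configurations either do not occur for $\mathrm{CB}(3)$ subschemes of length $9$ or $10$ on $S$, or that they lie in the closure of the generic stratum and hence do not produce spurious extra components. I would address this by a careful cohomological analysis (bounding $h^0(\mathcal{J}_P(1))$ using that $S$ has Picard rank one and that two distinct planes meet $S$ in curves sharing at most finitely many points) and by a monodromy argument on the choice of $c_2$ points out of the $12$ (or fewer, depending on the degree of the curve cut out in $H$) points of intersection, in the spirit of the $c_2 = 8$ argument, to guarantee that each irreducible component of the parameter space maps onto exactly one component of $\mathcal{M}(H, c_2)$ and vice versa.
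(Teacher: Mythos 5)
Your proposal has a genuine gap at its very first step. You claim that the cohomological conditions-counting argument forces every $\mathrm{CB}(3)$ subscheme $P$ of length $9$ or $10$ to lie in a hyperplane, thereby reducing the skew-lines and double-line cases of \cite[Proposition 17.8]{SIM3} to the planar case pointwise. This is false for $c_2 = 10$: take $P$ to consist of five points on each of two skew lines $l_1, l_2$ (each line meets $S$ in six points, so such $P$ exist on $S$). Any cubic through all but one point of $P$ vanishes at four points of one line and five points of the other, hence contains both lines and therefore the omitted point; so $P$ satisfies $\mathrm{CB}(3)$, yet it spans $\mathbb{P}^3$ and lies in no plane. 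Consequently the map you construct from the hyperplane family is \emph{not} surjective onto $\mathcal{M}(H,10)$, and your concluding principle (``surjective with irreducible generic fibres induces a bijection on components'') does not apply. The lemma is a statement about irreducible components precisely because the non-planar loci are nonempty and must be dealt with at the level of dimensions, not eliminated pointwise.

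The missing ingredient, which is the actual mechanism of the paper's proof, is a dimension comparison: by \cite[Corollary 3.1]{SIM1} every irreducible component of $\mathcal{M}(H,c_2)$ has dimension at least $6$ for $c_2 = 9$ and at least $9$ for $c_2 = 10$, while the loci of bundles arising from \ref{EE1} with $P$ on a double line (lines in $\mathbb{P}^3$ form a $4$-dimensional family), or for $c_2 = 10$ on a pair of skew lines, have strictly smaller dimension; hence these loci cannot contain any irreducible component, and the components of $\mathcal{M}(H,c_2)$ correspond bijectively to those of the hyperplane stratum. For $c_2 = 9$ the skew-lines case is indeed excluded pointwise, but by a different argument from yours: in the forced $4+5$ split, the line carrying exactly four points imposes independent conditions on cubics (one finds a cubic containing $l_2$ and three of the four points but missing the fourth), so $P$ fails $\mathrm{CB}(3)$. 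Your observations on the uniqueness of the plane when it exists (two planes meet in a line, and a line meets $S$ in a length-$6$ scheme, so $h^0(\mathcal{J}_P(1)) \le 1$) and the monodromy argument are sound as far as they go, but without the lower bound on component dimensions from \cite[Corollary 3.1]{SIM1} and the parameter counts for the non-planar strata, the reduction to the hyperplane case is unjustified and the proof does not close.
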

\begin{proof}
 
  {\bf Case I}: $c_2= 9$.\\
 In this case $P$ lies on a hyperplane or on a pair of skew lines each of which contains a subscheme of length at least $4$  of $P$ or $P$ lies on a double line.\\
 If $P$ lies on a pair of skew lines say, $l_1$ and $l_2$. Then one of them say, $l_1$ contains a subscheme of length exactly $4$  of $P$ and imposes $4$ independent conditions on sections 
 of $\mathcal{O}(3)$, which implies that $P$ can not satisfy $\text{CB}(3)$. Therefore, $P$ can not lie on a pair of skew lines.\\
   Since the dimension of lines in $\mathbb{P}^3$ is $4$, it is easy to see that the space of bundles $E$ fitting into an exact sequence of the form
 \ref{EE1},  where $P$ is a length $9$ subscheme lying in a double line has dimension strictly smaller than $6$. On the other hand, by \cite[Corollary 3.1]{SIM1}, 
 every irreducible component of $\mathcal{M}(H, 9)$ has dimension at least $6$. Therefore there is a bijection between  the irreducible components 
 of $\mathcal{M}(H, 9)$ and the irreducible components of the space of bundles $E$ fitting into an exact sequence of the form \ref{EE1}, where $P$ is a length $9$ subscheme 
 lying on a hyperplane. \\
 
 {\bf Case II}: $c_2 = 10$. \\
  By [\cite{SIM1}, Corollary 3.1], 
every irreducible component of $\mathcal{M}(H, 10)$ has dimension at least $9$.  On the other hand, if $P$ lies on a pair of skew lines or on a double line, then one can easily see
 that the dimension of the space of bundles fitting into an exact sequence of the form
 \ref{EE1},  where $P$ is a length $10$ subscheme lying in a pair of skew lines or in a double line has dimension strictly smaller than $9$. Thus as in  earlier case 
there is a bijection between the  the irreducible components 
 of $\mathcal{M}(H, 10)$ and the irreducible components of the space of bundles $E$ fitting into an exact sequence of the form \ref{EE1}, where $P$ is a length $10$ subscheme 
 lying on a hyperplane. \\
 \end{proof}
 \begin{proposition}
 $\mathcal{M}(H, 9)$ is irreducible of dimension $\le 10$. 
  \end{proposition}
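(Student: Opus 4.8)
The plan is to mirror the treatment of $\mathcal{M}(H,8)$ and of $\mathcal{M}_2$ in Proposition \ref{NP1}: use Lemma \ref{P1} to reduce to planar subschemes, bound the dimension by an incidence computation, and obtain irreducibility by a monodromy argument. By Lemma \ref{P1} it is enough to analyse the bundles $E$ fitting into \ref{EE1} with $P$ a length $9$ subscheme contained in a hyperplane $H$ (the skew-line and double-line configurations having already been excluded there). Set $C:=S\cap H$; for general $H$ this is a smooth irreducible plane sextic, and by adjunction $K_C\simeq\mathcal{O}_C(3)$, so that plane cubics cut out the canonical series on $C$.

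First I would record the relevant numerics. Since $P$ spans $H$ we have $h^0(\mathcal{J}_P(1))=1$, whence $h^0(E)=2$; and restricting to $H$ together with the Castelnuovo sequence on $S$ gives $h^1(S,\mathcal{J}_P(3))=h^0(H,\mathcal{J}_{P,H}(3))-1$. By Serre duality $\mathrm{Ext}^1(\mathcal{J}_P(1),\mathcal{O}_S)\cong H^1(S,\mathcal{J}_P(3))^{*}$, so a non-split extension exists only when $h^0(H,\mathcal{J}_{P,H}(3))\ge2$, i.e. when the nine points lie on a pencil of plane cubics. Generically $P$ is then the complete intersection of two cubics, which satisfies $\mathrm{CB}(3)$ by Theorem \ref{T} and Remark \ref{R}.

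For the dimension bound I would stratify by $\triangle_{H,i}:=\{P\in\mathrm{Hilb}^9(C):h^0(H,\mathcal{J}_{P,H}(3))=2+i\}$, so that $\dim\mathrm{Ext}^1(\mathcal{J}_P(1),\mathcal{O}_S)=1+i$ on $\triangle_{H,i}$. Consider the incidence variety $T=\{(C_3,P):P\subset C_3\}\subset\mathbb{P}(H^0(\mathcal{O}_H(3)))\times\mathrm{Hilb}^9(C)$ with projections $\pi_1,\pi_2$. Because $C$ is an irreducible sextic, every plane cubic $C_3$ meets it in $18$ points, so $\pi_1^{-1}(C_3)$ is finite and $\dim T\le9$; since $\pi_2$ has fibre $\mathbb{P}(H^0(H,\mathcal{J}_{P,H}(3)))$ of dimension $1+i$ over $\triangle_{H,i}$, this forces $\dim\triangle_{H,i}\le8-i$. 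Letting $H$ vary over the dual $\mathbb{P}^3$ and adding the $i$-dimensional projectivised space of extensions gives $\dim\{(H,P,E)\}\le3+(8-i)+i=11$; as the $\mathbb{P}^1$ of sections of each $E$ produces a one-parameter family of subschemes $P$, the space of bundles has dimension at most $11-1=10$, uniformly in $i$.

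Finally I would establish irreducibility. The top stratum $i=0$ consists of complete intersections of two cubics lying on $C$; as $H$ moves over the irreducible dual $\mathbb{P}^3$ and the pencil varies, a monodromy argument on the nine base points lying on $C$ — exactly as for $\mathcal{M}(H,8)$ and in Proposition \ref{NP1} — should show that this family, and hence the component of bundles it produces, is irreducible. Together with the lower bound $\dim\ge6$ from \cite[Corollary 3.1]{SIM1}, this yields a single component of dimension $\le10$. I expect the genuinely hard part to be this last step: verifying that the deeper strata $i\ge1$ (for instance nine points on a conic, where $h^0(H,\mathcal{J}_{P,H}(3))=3$) degenerate into the closure of the generic complete-intersection family rather than forming separate components. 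The dimension estimate itself is comparatively routine, its only inputs being the adjunction $K_C\simeq\mathcal{O}_C(3)$ and Bezout on $H$.
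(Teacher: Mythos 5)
Your reduction via Lemma \ref{P1}, the identity $h^1(S,\mathcal{J}_P(3))=h^0(H,\mathcal{J}_{P,H}(3))-1$, and the dimension bound are all sound; indeed your stratified incidence count with the loci $\triangle_{H,i}$ (imported from the proof of Proposition \ref{NP1}) is somewhat more careful than the paper's own count, which simply asserts $\dim Z\le 8$ for the locus $Z\subset \mathrm{Hilb}^9(Y)$ of subschemes satisfying $\mathrm{CB}(3)$ and then computes with a general $P\in Z$, where $h^1(\mathcal{J}_P(3))=1$. Both routes give the bound $\dim \mathcal{M}(H,9)\le 3+8-1=10$.

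The genuine gap is the one you flag yourself: irreducibility. Monodromy on the stratum $i=0$ only yields irreducibility of the closure of the complete-intersection family, and the lower bound $\ge 6$ from \cite[Corollary 3.1]{SIM1} does not rule out a separate component supported on a deeper stratum (for instance, nine points on a conic give roughly an $8$-dimensional family of bundles, and $8\ge 6$). The paper closes this with an observation that makes your anticipated degeneration analysis unnecessary: since $|P|=9$ and $P$ satisfies $\mathrm{CB}(3)$, it imposes at most $8$ conditions on cubics, so $h^0(\mathcal{J}_P(3))\ge 12$ and hence \emph{every} such $P$ --- not merely the general one --- lies on a complete intersection of two cubics in $\mathbb{P}^3$. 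The space of such complete intersections (pencils of cubics) is irreducible, and conversely, by Remark \ref{R} (resting on Chasles, Theorem \ref{T}), any nine coplanar points on such a complete intersection satisfy $\mathrm{CB}(3)$ and therefore yield points of $\mathcal{M}(H,9)$. Thus all strata, including your $i\ge 1$ loci (nine points on a conic do lie on the complete intersection of two cubic surfaces containing that conic), are dominated by a single irreducible parameter space, and one monodromy argument concludes. Your proposal is missing exactly this device; with it, the verification that the deeper strata fall into the closure of the generic complete-intersection family --- the step you called the genuinely hard part --- is bypassed rather than solved.
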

 \begin{proof} 
 By Lemma \ref{P1},  it is enough to consider the space of bundles $E$ fitting into an exact sequence of the form \ref{EE1}, where $P$ is a length $ 9$ subscheme 
 lying on a hyperplane.\\
 Let $H$ be a plane in general position with respect to
$S$, and let $Y: = S  \cap  H$. Let $P$ consists of a general subscheme of length $9$  in $Y$.
 The map $H^0(\mathcal{O}_H(3)) \to  H^0(\mathcal{O}_Y(3))$
is injective (since $Y$ is a curve of degree $6$ in the plane $H$), so a general subscheme
of length $9$  in $Y$ imposes independent conditions on $H^0(\mathcal{O}_H(3))$.  Thus a general subscheme of length $9$   fails to satisfy $\text{CB}(3)$.
Let $Z \subset \text{Hilb}^9(Y)$ be the subscheme  which fails to impose independent conditions on sections of $\mathcal{O}_H(3)$.  Note that, then such a subscheme $P$ is contained  in a complete intersection of two cubics.
On the other hand, if a subscheme $P$ is contained in a complete itersection of two cubics, then  by Remark \ref{R}, $P$ satisfies $\text{CB}(3)$ and hence gives a point in $\mathcal{M}(H, 9)$. Now the space of  complete intersections of two cubics is irreducible. Again using monodromy argument one can show that $\mathcal{M}(H, 9)$ is irreducible.\\ 
Let us compute the dimension of such subschemes.
Note that, 
 $\text{dim}(Z) \le 8$. Also for a general point 
$P \in Z, h^0(\mathcal{J}_{P/H}(3)) = 2$, where $\mathcal{J}_{P/H}$ denotes the ideal sheaf of $P$ in $H$. Thus $h^0(\mathcal{J}_P(3)) = 12$. In other words, $h^1(\mathcal{J}_P(3)) =1$. Thus $P$ determines a vector bundle $E$ uniquely up to isomorphism.  
Since $h^0(\mathcal{J}_P(1)) = 1$, we have $h^0(E) = 2$. This
means that for a given bundle $E$, the space of choices of sections $s$ (modulo scaling)
leading to the subscheme $P$, has dimension $1$. Hence the dimension of the space of
bundles obtained by this construction is one less than the dimension of the space of
subschemes.\\
 Now there is a three-dimensional
space of choices of the plane $H$, and for each one we have an $9-$dimensional space
of choices of the subscheme $P$.  This gives the total dimension of the space of choices of $P = 3 + \text{dim}(Z)-1$. So total dimension of such bundles is 
 $3 + \text{dim}(Z) -1$ which is $\le 10$.\\
 
 \end{proof}
 
\begin{proposition}
$\mathcal{M}(H,10)$ is irreducible of dimension $11$.
\end{proposition}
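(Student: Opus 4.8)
The plan is to run the same strategy as in the preceding two propositions, using Lemma \ref{P1} to reduce to bundles $E$ fitting in the sequence \ref{EE1} with $P$ a length $10$ subscheme lying in a plane, and then to isolate the dominant stratum of such $P$. I would fix a plane $H$, set $Y:=S\cap H$ (a plane sextic curve), and recall from the residual sequence $0\to\mathcal{O}(2)\to\mathcal{J}_P(3)\to\mathcal{J}_{P,H}(3)\to 0$ that for $P\subset H$ the property $\mathrm{CB}(3)$ in $\mathbb{P}^3$ is equivalent to $P$ satisfying Cayley--Bacharach for plane cubics in $H$. I would then stratify the $\mathrm{CB}(3)$ locus $Z\subset\mathrm{Hilb}^{10}(Y)$ by the integer $k:=h^0(\mathcal{J}_{P,H}(3))$, and check (using \cite[Proposition 17.8]{SIM3} to stay in the plane, then the classification of ten--point $\mathrm{CB}(3)$ schemes in $\mathbb{P}^2$) that the only strata that can carry a component are $k=1$ and the special locus $k=3$.

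The key computation is that the dominant stratum is $k=1$. Here $P$ lies on a \emph{unique} plane cubic $C\subset H$: for the general member $C$ is irreducible, and since a cubic different from $C$ meets $C$ in only $9$ points by B\'ezout while $|P|=10$, no other cubic passes through $P$, so $k=1$; moreover for general $P$ no $9$ of its points are cut out by a second cubic, whence every cubic through $P\setminus\{p\}$ equals $C$ and contains $p$, so $\mathrm{CB}(3)$ holds. A residual--sequence computation then gives $h^0(\mathcal{J}_{P,S}(3))=10+1=11$, while $\chi(\mathcal{J}_{P,S}(3))=\chi(\mathcal{O}_S(3))-10=10$ and $h^2(\mathcal{J}_{P,S}(3))=h^2(\mathcal{O}_S(3))=0$, so $h^1(\mathcal{J}_{P,S}(3))=1$. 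Thus $\mathrm{Ext}^1(\mathcal{J}_P(1),\mathcal{O}_S)$ is one--dimensional and $E$ is determined uniquely by $P$; since $P$ spans $H$ we also get $h^0(\mathcal{J}_{P,S}(1))=1$ and hence $h^0(E)=2$.

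For the dimension count on this stratum I would parametrize by triples $(H,C,P)$: the plane $H$ varies in a $3$--dimensional family, the cubic $C\subset H$ in a $9$--dimensional family, and $P$ is a choice of $10$ of the $\deg(C\cap S)=18$ points of $C\cap S$, a finite choice, with $C$ recovered from $P$ as the unique cubic through it. This gives $\dim\{P\}=12$. Because $h^1(\mathcal{J}_P(3))=1$ the bundle is unique per $P$, so $\dim\{(E,s)\}=12$, and since the fibre of $(E,s)\mapsto E$ is the pencil $\mathbb{P}(H^0(E))=\mathbb{P}^1$ of sections, I obtain $\dim\mathcal{M}(H,10)=12-1=11$. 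The conic stratum $k=3$ (where $P\subset D$ a conic, forcing all cubics through $P$ to contain $D$, so $h^1(\mathcal{J}_P(3))=3$) gives, by the same bookkeeping, dimension $3+5+2-1=9$, hence lies strictly below the main stratum.

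Finally, for irreducibility I would note that the family of pairs $(H,C)$ with $C$ an irreducible plane cubic is irreducible, and invoke a uniform position / monodromy argument as in \cite{SIM2} to show that the monodromy acting on the $18$ points of $C\cap S$ is the full symmetric group, so that the incidence variety of triples $(H,C,P)$ — and therefore the $k=1$ stratum — is irreducible of dimension $11$; together with Lemma \ref{P1} (which forces every component of $\mathcal{M}(H,10)$ to come from an in--plane configuration, the non--planar ones having dimension $<9$) this produces a single $11$--dimensional component. The hard part will be the two remaining pieces of irreducibility: establishing the full monodromy on $C\cap S$, and showing that the lower strata (in particular the conic stratum, where $h^1(\mathcal{J}_P(3))$ jumps from $1$ to $3$) lie in the closure of the $k=1$ stratum — which I would approach by degenerating $C$ to a conic--plus--line with all ten points specializing onto the conic — so that these strata contribute no new irreducible component.
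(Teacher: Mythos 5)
Your proposal is correct and follows the same skeleton as the paper's proof: reduction via Lemma \ref{P1} to planar $P$, the existence of a plane cubic $C$ through $P$, the computation $h^0(\mathcal{J}_{P,S}(3))=11$, hence $h^1(\mathcal{J}_P(3))=1$ and a unique extension \ref{EE1}, $h^0(E)=2$, the count $3+9-1=11$, and irreducibility via monodromy on the choices of $10$ of the $18$ points of $C\cap S$ (which the paper, like you, invokes without proof). The genuine difference is your stratification by $k=h^0(\mathcal{J}_{P,H}(3))$ and your explicit treatment of the conic stratum $k=3$. The paper skips this: its assertions that a general cubic surface through $P$ cuts $H$ in an \emph{irreducible} plane cubic, and that uniqueness of $C$ follows because ``any two plane cubic curves intersect at $9$ points,'' both tacitly exclude the case $P\subset D$ for a conic $D$, where every cubic through nine of the ten points contains $D$ (a cubic meets a conic in only six points) and uniqueness fails. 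Since ten points on a conic do satisfy $\mathrm{CB}(3)$ and yield stable locally free extensions, and since \cite[Corollary 3.1]{SIM1} bounds components of $\mathcal{M}(H,10)$ below by exactly $9$ --- the dimension of your conic stratum --- your flagged remaining step (showing this stratum lies in the closure of the $11$-dimensional family, e.g.\ by degenerating $C$ to a conic plus a line with all ten points specializing onto the conic, while tracking the extension class as $h^1(\mathcal{J}_P(3))$ jumps from $1$ to $3$) is a real piece of the argument that the paper omits altogether. So your route buys a complete case analysis at the cost of one extra closure/degeneration argument, which you have correctly identified but not yet carried out; the paper's shorter proof is, as written, silently incomplete on precisely that point.
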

\begin{proof} 
By Lemma \ref{P1},  it is enough to consider the space of bundles $E$ fitting into an exact sequence of the form \ref{EE1}, where $P$ is a length $ 10$ subscheme 
 lying on a hyperplane.\\

  Since  $P$ satisfies $\text{CB}(3)$, $h^0(\mathcal{J}_P(3)) \ge 11$. Thus there exists a cubic 
 hypersurface containing $P$ and not containing $H$. By Bertini, a general such cubic is irreducible and hence intersects $H$ in an irreducible cubic plane curve.  Therefore, $P$
 lies in a plane cubic curve. 
 Since any two plane  cubic curves intersect in a subscheme of length   $9$ ,  there exists a unique plane cubic curve  containing $P$.\\
  On the other hand, given a plane cubic curve 
  intersects  $S$ in a subscheme of length $18$   and a plane cubic curve imposes $9$ independent conditions on the sections of $\mathcal{O}(3)$. Thus  any subscheme of length $10$  of this subscheme of length $18$   
   satisfies $\text{CB}(3)$ and  hence gives a vector bundle as an
  extension of the form:
  \[
  0 \to \mathcal{O} \to E \to \mathcal{J}_P(1) \to 0.
  \]
  Note that, such an extension is unique up to isomorphism.  Therefore,  $\mathcal{M}(H, 10)$ is isomorphic to the space of bundles $E$ fitting into an exact sequence of the form
\ref{EE1}  where, $P$ is a length $10$ subscheme of $C \cap S$ for $C$ a  plane cubic curve  in $\mathbb{P}^3$. Now using the monodromy  argument on the 
  set of choices of subschemes of length $10$  out of subscheme of length $18$  of $C \cap S$, as $C$ moves one can show that $\mathcal{M}(H, 10)$ is irreducible. \\
      Let $Y$ be a plane cubic curve.  Then for a general point $P$ in $\text{Hilb}^{c_2}(Y)$
one has $ h^0(\mathcal{J}_P(3)) = 21-c_2$, which gives $h^1(\mathcal{J}_P(3)) =1$. In other words, $P$ determines a vector bundle $E$ uniquely up to isomorphism.
 Since $h^0(\mathcal{J}_P(1)) = 1$, we have $h^0(E) = 2$. This
means that for a given bundle $E$, the space of choices of section $s$ (modulo scaling)
leading to the subscheme $P$, has dimension $1$. 
Hence the dimension of the space of
bundles obtained by this construction is one less than the dimension of the space of
subschemes.\\
 Now there is a three-dimensional
space of choices of the plane $H$, and for each one we have a $10-$dimensional space
of choices of the cubic curves.  This gives the total dimension of the space of choices of $P = 3 + 9=12$. So total dimension of such bundles is 
 $12 -1 =11$.
   \end{proof}
   \begin{remark}
   Note that $\mathcal{M}(H, 11)$ has two components of dimension $12$ and $13$, respectively.
   \end{remark}
  \subsection{$c_2 \ge 12$}
  Let $E \in \mathcal{M}(H, c_2)$ be a general point of an irreducible component of the moduli space.
  Then we have 
   \begin{equation}
 0 \to \mathcal{O}_S \to E \to \mathcal{J}_P(1) \to 0
 \end{equation}
 where, $P$ is a zero dimensional subscheme of $S$ of length $c_2$. \\ 
 Let $P$ lies in a hyperplane $H$ and $P^{\prime} \subset P$ be a subscheme of length $10$. Then $h^0(H,\mathcal{J}_{P^{\prime}/H}(3)) = 0$ and $h^0(E) =2$. 
 So any cubic hypersurface which contains $P$ will also contain $H$. Therefore,  $h^0(\mathcal{J}_P(3)) = 10$ and hence $h^1(\mathcal{J}_P(3)) = c_2-10$. 
 The total dimension of bundles which fit in the above exact sequence with $P$ lying in a plane is equal to the dimension of such subschemes $-1 + c_2 -11$.
 Now the dimension of such subschemes is $3 + c_2$ Thus Total dimension is $2c_2 -9$. 
 
  Also taking $L =\mathcal{O}_S$ and $L^{\prime} = \mathcal{O}_S(1)$ in \ref{e2} we have:
  \begin{equation}
 0 \to E \otimes \mathcal{O}_S(-1) \to \text{End}^0(E) \to \mathcal{J}_P^2(1) \to 0.
 \end{equation} 
 Tensoring the above exact sequence  by $\mathcal{O}_S(2)$ and considering the cohomology sequence, we have
 \[
 h^0(S, \text{End}^0(E) \otimes \mathcal{O}_S(2)) \le h^0(S, E(1)) + h^0(S, \mathcal{J}_P^2(3)).
 \] 
 If $P$ is not in any hyperplane section, then $h^0(\mathcal{J}_P(1)) = 0$. If $h^0(S, \mathcal{J}_P^2(3)) \ne 0$, then there is a cubic hypersurface  which is singular along $P$. But
 an irreducible cubic hypersurface can have at most $4$ isolated singularities \cite{BL}, thus any non-zero section of $\mathcal{J}_P^2(3)$ is reducible, in this situation it has two 
 components, a hyperplane and a quadratic hypersurface. On the other hand,
 any  irreducible quadratic hypersurface has only one isolated singularity \cite{BL}. Therefore,  $h^0(S, \mathcal{J}_P^2(3)) = 0$. 
 Thus we have  $ h^0(S, \text{End}^0(E) \otimes \mathcal{O}_S(2)) \le h^0(S, E(1))= h^0(\mathcal{O}(1)) + h^0(\mathcal{J}_P(2))= 4 + h^0(\mathcal{J}_P(2))$. \\
 
    If $h^0(\mathcal{J}_P(2)) \ge 3$, then $P$ lies in a  complete intersection of two quadratic hypersurfaces with out having any common component , i.e. in a curve $C$ of degree $4$. Thus for $c_2 \ge 13$, any cubic hypersurface
 which contains $P$ also contains $C$.  Now a complete intersection curve of degree $4$ 
 imposes $12$ independent conditions on sections of $\mathcal{O}(3)$. Thus we have, $h^0(\mathcal{J}_P(3)) = 8$. Therefore, the dimension of the isomorphic classes of bundles determined by $P, $ is $h^1(\mathcal{J}_P(3)) -1= c_2 -11$. Also we have $P$ lies  in a curve $Y = Q \cap S$, where $Q$ is a quadratic hypersurface and $Q$ varies
 over a $9$ dimensional variety of quadrics. Thus the dimension of such zero
 dimensional subscemes of length is at most $c_2 + 9$. Thus the total dimension of such bundles is at most $c_2 + 9 + c_2 -11 = 2c_2-2$. \\
 On the other hand, if $h^0(\mathcal{J}_P(2)) \le 2$, then $h^0(S, \text{End}^0(E) \otimes \mathcal{O}_S(2)) \le 6$. Thus in this situation,  every component of $\mathcal M(H, c_2)$ has dimension at most $4c_2 -39 + 6= 4c_2-33$(the expected dimension + the dimension of obstructions).
 Thus we have the following Proposition.
 \begin{proposition}
 Let $c_2 \ge 13$. Then every component of $\mathcal M(H, c_2)$ has dimension at most $\text{max}\{2c_2-2, 4c_2-33\}$ and for $c_2 = 12$, it has dimension $4c_2-39 + 10= 4c_2-29$.
 \end{proposition}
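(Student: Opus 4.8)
The proposition collects the three dimension estimates assembled in the paragraphs above, so the plan is to organize a general bundle by the geometry of its associated subscheme $P$ and then retain the largest bound. I would begin by using Theorem \ref{TZ} to write a general $E$ of any component as an extension
\[
0 \to \mathcal{O}_S \to E \to \mathcal{J}_P(1) \to 0
\]
with $P$ zero-dimensional of length $c_2$, and then distinguish the two cases according to whether or not $P$ lies in a hyperplane.

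When $P$ lies in a hyperplane I would reuse the fact that every cubic through $P$ must contain that hyperplane, so that $h^0(\mathcal{J}_P(3))$ is pinned down and the space of extensions has projective dimension $c_2-11$; combining the $3+c_2$ parameters for $P$ with these extensions and subtracting the one-dimensional ambiguity coming from the choice of section gives the bound $2c_2-9$. When $P$ is not contained in a hyperplane I would instead bound the obstruction space by twisting the sequence \ref{e2} (with $L=\mathcal{O}_S$, $L'=\mathcal{O}_S(1)$) by $\mathcal{O}_S(2)$, which yields
\[
h^0(\text{End}^0(E)\otimes\mathcal{O}_S(2)) \le h^0(E(1)) + h^0(\mathcal{J}_P^2(3)).
\]
Here I would invoke $h^0(E(1)) \le 4 + h^0(\mathcal{J}_P(2))$ together with the crucial vanishing $h^0(\mathcal{J}_P^2(3))=0$, which holds because a nonzero section would be a cubic singular along $P$ and, by the singularity bounds for irreducible cubics and quadrics, would force a splitting into a plane and a quadric whose singular locus sits in a hyperplane, contradicting the non-planarity of $P$ for $c_2\ge 13$.

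It then remains to split on $h^0(\mathcal{J}_P(2))$: if it is at least $3$ the subscheme $P$ lies on a degree-four complete-intersection curve and (again using $c_2\ge 13$) every cubic through $P$ contains it, so the explicit parameter count over the nine-dimensional family of quadrics produces the bound $2c_2-2$; if it is at most $2$ the obstruction space has dimension at most $6$, whence the component has dimension at most $(4c_2-39)+6=4c_2-33$. Taking the maximum of the three bounds and observing $2c_2-9\le 2c_2-2$ gives the stated $\max\{2c_2-2,\,4c_2-33\}$ for $c_2\ge 13$.

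The step I expect to be the real obstacle is the endpoint $c_2=12$, where the degree-four-curve argument breaks down: such a curve meets a cubic in exactly $12=c_2$ points, so the cubic need not contain the curve and the clean bound $2c_2-2$ is lost. There I would fall back on the obstruction estimate alone, where the surviving control $h^0(\mathcal{J}_P(2))\le 6$ gives obstruction dimension at most $10$ and therefore dimension $4c_2-39+10=4c_2-29$. Verifying that $h^0(\mathcal{J}_P(2))\le 6$ for every non-planar length-$12$ subscheme, and checking that this obstruction bound is sharp, is the delicate point I would spend most effort on.
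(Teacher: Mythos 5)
Your proposal follows the paper's proof essentially verbatim: the same dichotomy on whether $P$ lies in a hyperplane, the same $2c_2-9$ count for planar $P$, the same twist of the sequence \ref{e2} giving $h^0(\mathrm{End}^0(E)\otimes\mathcal{O}_S(2)) \le 4 + h^0(\mathcal{J}_P(2))$ via the vanishing $h^0(\mathcal{J}_P^2(3))=0$, and the same split on $h^0(\mathcal{J}_P(2))\ge 3$ (degree-four curve, bound $2c_2-2$, valid only for $c_2\ge 13$) versus $h^0(\mathcal{J}_P(2))\le 2$ (bound $4c_2-33$). Your reconstruction of the $c_2=12$ endpoint --- the cubic meeting the quartic curve in exactly $12$ points, so one falls back on $h^0(\mathcal{J}_P(2))\le 6$ for non-planar $P$ and obstruction dimension at most $10$ --- is precisely the argument the paper leaves implicit, so there is nothing to correct.
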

 
 \section{Boundary strata}\label{S6}
The boundary $\partial{\mathcal{M}(H, c_2)}: = \overline{\mathcal{M}}(H, c_2) - \mathcal{M}(H, c_2)$ is the set of points 
corresponding to torsion-free sheaves which are not locally free.

Let $\mathcal{M}(c_2, c_2^\prime):= \{[F] \in \overline{\mathcal{M}}(H, c_2)| F \text{ is not locally free with } c_2(F^{**})=c_2^\prime \}$.
Then the boundary has a decomposition into locally closed subsets 
\[
\partial{\mathcal{M}(H, c_2)}= \amalg_{c_2^\prime < c_2} \mathcal{M}(c_2, c_2^\prime).
\]
By the construction of $\mathcal{M}(c_2, c_2^\prime)$, we have a well defined map,
\[
 \mathcal{M}(c_2, c_2^\prime) \longrightarrow \mathcal{M}(H, c_2^\prime).
\]
The map  takes $E \longrightarrow E^{**}$. The fiber over $E \in \mathcal{M}(H, c_2^\prime)$ is the Grothendieck Quot-scheme
$\text{Quot}(E; d)$ of quotients of $E$ of length $d: = c_2 -c_2^\prime$.  Thus 
$\text{dim}(\mathcal{M}(c_2, c_2^\prime)) = \text{dim}(\mathcal{M}(H, c_2^\prime)) + \text{dim}(\text{Quot}(E; d))$. Now the 
dimension of $\text{Quot}(E; d)$ is $3(c_2-c_2^\prime)$. 
Therefore, 
\begin{equation}\label{EQX}
 \text{dim}(\mathcal{M}(c_2, c_2^\prime)) = \text{dim}(\mathcal{M}(H, c_2^\prime)) + 3(c_2-c_2^\prime).
\end{equation}
\ref{EQX} allows us to fill in the dimensions of the strata $\mathcal{M}(c_2, c_2^{\prime})$ in the Tables \ref{table:1} and \ref{table:2} 
starting from the dimensions of the moduli spaces given by previous section.
 The entries in the second column are the expected dimensions $ 4c_2-39$; in
the third column the upper bounds of the  dimensions of $\mathcal{M}(H, c_2)$ ; and in the following columns, the upper bounds of the dimensions of 
 $\mathcal{M}(c_2, c_2-d), d=, 1, 2, ..., 19$. \\

 \newpage
\begin{table}
\begin{center}
\begin{tabular}{ |c|c|c|c|c|c|c|c|c|c|c|c|} 
 \hline
 $c_2$ & e.d & $dim(\mathcal{M}) \le$ & $d=1$ & $d=2$ & $d=3 $  & $d=4$ & $d=5$ & $d=6$ & $d=7$ & $d=8$ & $d=9$ \\ 
5 & -19 &  2 & -- & --& --&-- &-- & --& --& --& --\\ 
 6 & -15 &  3 & 5 & -- & -- & -- & -- & -- & -- & -- & --\\ 
 7 & -11 & -1 & -- & -- & -- & -- & -- & -- & -- & -- & -- \\
 8 & -7  &  7 & -- & 9 & 11 & -- & -- & -- & -- & -- & --  \\
 9 & -3 & 10 & 10 &-- & 12 & 14 & -- & -- & -- & -- & -- \\
 10 & 1 & 11 & 13 & 13 & --& 15 & 17 & -- & -- & -- & -- \\
  11 & 5 & 13 & 14 & 16 & 16 & -- & 18 & 20 & -- & -- & -- \\
  12 & 9 & 19 & 16 & 17 & 19 & 19 & -- & 21& 23& -- & --\\
  13 & 13& 24 & 22 & 19 & 20 & 22& 22& --& 24 & 26& --\\
  14 & 17 & 26 & 27 & 25 & 22 & 23 & 25 & 25& --& 27 & 29\\
  15 & 21 & 28 & 29& 30 & 28 & 25 & 26 & 28 & 28 & --& 30 \\
  16 & 25 & 30 & 31 & 32 & 33 & 31 & 28 & 28& 31 & 31 & --\\
  17 & 29 & 34 & 33 & 34 & 35 & 36 & 34 & 31 & 31 & 34 & 34\\
  18 & 33 & 38 & 37 & 36 & 37 & 38 & 39 & 37 & 34 & 34 & 37\\
  19 & 37 & 42 & 41 & 40 & 39 & 40 & 41 & 42 & 40 & 37 & 37 \\
  
 \hline
\end{tabular}

\caption{upper bounds of dimensions of strata}
\label{table:1}
\end{center}

\end{table}

\begin{table}
\begin{center}
\begin{tabular}{ |c|c|c|c|c|c|c|c|} 
 \hline
 $c_2$ & e.d & $dim(\mathcal{M}) \le$ & $d=10$ & $d=11$ & $d=12 $  & $d=13$ & $d=14$  \\ 
5 & -19 &  2 & -- & --& --&-- &-- \\ 
 6 & -15 &  3 & -- & -- & -- & -- & -- \\ 
 7 & -11 & -1 & -- & -- & -- & -- & -- \\
 8 & -7  &  7 & -- & -- & -- & -- & --   \\
 9 & -3 & 10 & -- &-- & -- & -- & --  \\
 10 & 1 & 11 & -- & -- & -- & -- & -- \\
  11 & 5 & 20 & -- & -- & -- & -- & -- \\
  12 & 9 & 22 & -- & -- & -- & -- & -- \\
  13 & 13& 24 & -- & -- & -- & --& -- \\
  14 & 17 & 26 & -- & -- & -- & -- & -- \\
  15 & 21 & 28 & 32& -- & -- & -- & --  \\
  16 & 25 & 30 & 33 & 35 & -- & -- & -- \\
  17 & 29 & 34 & -- & 36 & 38 & -- & -- \\
  18 & 33 & 38 & 37 & -- & 39 & 41 & -- \\
  19 & 37 & 42 & 40 & 40 & -- & 42 & 44  \\
  
 \hline
\end{tabular}

\caption{upper bounds of dimensions of strata}
\label{table:2}
\end{center}

\end{table}

\begin{definition}
 A closed subset $X \subset {\mathcal{M}}(H, c_2)$ is called good if every irreducible component of $X$ contains a point
 $[E]$ with $H^2(S, \text{End}^{0}(E)) = 0$, where $\text{End}^{0}(E)$ denotes the traceless endomorphisms of $E$.
\end{definition}\begin{theorem}
Suppose $\mathcal{M}(H, c_2)$ is good for $c_2 \ge 20$.  Then $\overline{\mathcal{M}}(H, c_2)$ is also good for $c_2 \ge 27$.
\end{theorem}
\begin{proof}
Since we are given that $\mathcal{M}(H, c_2)$ is good, every component has expected dimension $4c_2 -39$ for $c_2 \ge 27$. Thus every boundary strata $\mathcal{M}(c_2, c_2^{\prime})$ for $c_2 \ge 27$ and $20 \le c_2^{\prime} \le 26$ has smaller dimension. Also from the   tables \ref{table:1} and \ref{table:2}, we can see that other boundary stratas also have dimension smaller than the expected dimension. Thus every component of $\overline{\mathcal{M}}(H, c_2)$ for $c_2 \ge 27$ has expected dimension, which concludes the Theorem.
\end{proof}  

\section*{Acknowledgement}
 This research was supported in part by SERB-EMR/2015/002172 project fund.

\end{document}